\documentclass[10pt,reqno]{amsart}
\usepackage{graphicx}
\usepackage{indentfirst,csquotes,mathdots}
\usepackage{amssymb,amsthm,amsmath}
\usepackage{xcolor,paralist,titlesec,fancyhdr,etoolbox} 

\usepackage{hyperref} 
\hypersetup{ colorlinks=true, linkcolor=black, filecolor=black, urlcolor=black }

\usepackage[capitalize]{cleveref}

\usepackage{aliascnt}

\newaliascnt{definition}{theorem}

\aliascntresetthe{definition}
\crefname{definition}{Definition}{Definitions}

\newaliascnt{example}{theorem}

\aliascntresetthe{example}
\crefname{example}{Example}{Examples}

\newaliascnt{lemma}{theorem}
\newtheorem{lemma}[lemma]{Lemma}
\aliascntresetthe{lemma}
\crefname{lemma}{Lemma}{Lemmas}

\newaliascnt{proposition}{theorem}
\newtheorem{proposition}[proposition]{Proposition}
\aliascntresetthe{proposition}
\crefname{proposition}{Proposition}{Propositions}

\newaliascnt{corollary}{theorem}
\newtheorem{corollary}[corollary]{Corollary}
\aliascntresetthe{corollary}
\crefname{corollary}{Corollary}{Corollaries}

\newaliascnt{conjecture}{theorem}

\aliascntresetthe{conjecture}
\crefname{conjecture}{Conjecture}{Conjectures}
\titleformat{\section}[hang]{\normalfont\Large\bfseries}{\thesection}{1em}{}
\titleformat{\subsection}[hang]{\normalfont\large\bfseries}{\thesubsection}{1em}{}

\usepackage{lipsum}

\begin{document}
\title{Properties of Kloosterman Sums for Long Weyl Element on GL\((n)\) } 
\author[]{Srijeet Bhattacharjee}
\date{}
\address{Indian Statistical Institute, 203 Barrackpore,
Trunk Road, Kolkata 700108, West Bengal, India \vspace{0.5cm}}
\email{srijeet.b2005@mail.com}
\maketitle

\let\thefootnote\relax
\footnotetext{} 

\begin{abstract}
We give an alternate expression for the Kloosterman sums associated with the long Weyl element of \(GL(n)\) that avoides the use of Pl\"ucker coordinates. The expression is instead based on the use of the well-known Cauchy-Binet formula from linear algebra. From this expression, we prove some important properties of these Kloosterman sums.
\end{abstract} 
\noindent
\section{Introduction}
The classical Kloosterman sums were introduced by Kloosterman in the study of representation of integers by positive quadratic forms, through Kloosterman's refinement of the Hardy-Littlewood circle method. Given integers \(m,n\) and a modulus \(c>0\), this is given by an algebraic exponential sum

\begin{equation}
S(m,n;c)=\sum_{\substack{a,d \pmod{c}\\ad\equiv1 \pmod{c}}}e\left(\frac{ma+nd}{c}\right)\label{explicit2}
\end{equation}

Subsequently, these sums were realized as Fourier coefficients of Poincar\'e series, whence they appear in the geometric side of the Petersson/Bruggeman-Kuznetsov trace formula. Bounds on Kloosterman sums are important in analytic number theory because they give information about the analytic properties of modular or Maa\ss\ forms.

Using factorisation properties, Kloosterman sums can be broken down into prime power modulus. Andr\'e Weil used techniques from algebraic geometry to prove square root cancelation of these sums and combining the prime power factors one can show that,
\[
S(m,n;c)\le \tau(c)c^{1/2}(m,n,c)^{1/2}
\]

where \(\tau\) is the divisor function. In the higher rank setup of automorphic forms, expressions of Poincar\'e series and Kloosterman series were first carried out for \(GL(3)\) in \cite{BFG88}. Corresponding to each element of the Weyl group of \(GL(3)\), there is an associated Kloosterman sum. The most important sum (due to it's contribution in the higher rank Kuznetsov trace formula) is associated to the long Weyl element, \(w_0=\begin{pmatrix}
    &&1\\&-1\\1
\end{pmatrix}\), and the corresponding Kloosterman sum has the following expression for \(S(M_1,M_2,N_1,N_2,D_1,D_2)\),
\begin{equation}
\sum_{\substack{B_1,C_1 \pmod{D_1}\\B_2,C_2 \pmod{D_2}\\(B_i,C_i,D_i)=1\\D_1D_2|D_1C_2+B_1B_2+C_1D_2}}e\left(\frac{M_1B_1+N_1(Y_1D_2-Z_1B_2)}{D_1}+\frac{M_2B_2+N_2(Y_2D_1-Z_2B_1)}{D_2}\right)
\label{explicit3}
\end{equation}

where \(Y_i,Z_i\) are chosen such that \(Y_iB_i+Z_iC_i\equiv1\pmod{D_i}\) for \(i=1,2\). 

The explicit form of the Kloosterman sum was obtained by using Pl\"ucker coordinates to understand the Bruhat decomposition of \(GL(3)\). However, as pointed out in \cite{BM24}, this approach does not extend to \(n\ge 3\) because the number of Pl\"ucker relations grows rapidly with \(n\). [\cite{G06}, Ch-11] also gives an account of the general theory of Poincar\'e series, Kloosterman sums, Pl\"ucker coordinates and Kuznetsov trace formula for \(GL(n)\).

We give an alternate expression for the Kloosterman sum associated to the long Weyl element for \(GL(n)\), using the well-known Cauchy-Binet formula for the determinant of a sub-matrix of the product of two matrices \(C=AB\) involving the determinants of the sub-matrices of the individual matrices \(A\) and \(B\). The expression is not as explicit as \eqref{explicit3} but from this expression we can derive almost all the properties of Kloosterman sums mentioned in \cite{BFG88} without invoking Pl\"ucker coordinates. 

\subsection{Notations}
    For tuple \(T\) of length \(n\), \(T_i\) will denote the \(i\)-th coordinate of \(T\), \(\tilde{T}=(T_{n},T_{n-1},...,T_1)\) will denote the components of \(T\) reversed, and for any \(1\le i\le n\), \(T^{(i)}=(T_1,\cdots T_{i-1},T_{i+1},\cdots,T_n)\). For \(T,\ P\in \mathbb{Z}^{n},\ TP\) denotes usual pointwise product of \(T\) and \(P\).
    
    For \(A\in M(n,\mathbb{R}),\ A_{i,j}\) will denote \((i,j)\)-th coordinate of \(A\), \(\Lambda_{i_1,\cdots,i_m}^{j_1,\cdots,j_m}(A)\) will denote the determinant of the \(m\times m\) matrix formed by rows indexed by \(i_1,\cdots,i_m\) and columns indexed by \(j_1,\cdots,j_n\). \(A^t\) will denote the transpose of \(A\) and \(A^{at}\) will denote the ``anti-transpose'' of \(A\), obtained by reflecting the elements of \(A\) along the anti-diagonal of \(A\).
    
\section{Preliminaries}
We recall the definition of Kloosterman sums. Let \(G\) be the reductive group, GL\((n)\) of invertible \(n\times n\) matrices. \(T\) denotes the maximal torus of diagonal matrices with non-zero entries in \(G\), \(U\) denotes the standard maximal unipotent subgroup of upper triangular matrices with all diagonal entries being \(1\). Let \(N=N_G(T)\) be the normalizer of \(T\) in \(G\), which consists of matrices that have exactly one non-zero entry in each row and column. 

\(W=N/T\) is called the Weyl Group. Each equivalence class has a unique permutation matrix as a representative, and hence, for \(G\), \(W\cong S_n\), the Symmetric group on \(n\) elements. However, it is a standard practice to change the sign of some of the entries to force the representative to be in SL\((n)\).  For such a representative \(w\), let \(U_w= U \cap w^{-1}U^tw\). Consider a congruence subgroup \(\Gamma\) of \(G(\mathbb{Z})\), and an element \(c\in N(\mathbb{R})\). Let \(C(c)=U(\mathbb{R})cwU(\mathbb{R})\cap \Gamma,\ X(c)=U(\mathbb{Z})\backslash C(n)/U_w(\mathbb{Z})\), and define the canonical projections \(\mu: X(c)\to U(\mathbb{Z})\backslash U(\mathbb{R}),\ \mu':X(c)\to U(\mathbb{R})/U(\mathbb{Z})\).

Consider any two characters \(\psi,\psi'\) of \(U(\mathbb{R})\) that are trivial on \(U(\mathbb{Z})\). It is well known that any such character is of the form \(\psi_N\), where \(N=(N_1,...,N_{n-1})\in \mathbb{Z}^{n-1}\) and
\[
\psi\left(\begin{pmatrix}
1 & x_{1,2} & x_{1,3}& \cdots & x_{1,n} \\
   & 1 & x_{2,3}&\cdots & x_{2,n} \\
   && 1 & \cdots & x_{3,n} \\
   & & & \ddots & \vdots \\
  & &  &  & 1
\end{pmatrix}\right)=e(N_1x_{1,2}+N_2x_{2,3}+...+N_{n-1}{x_{(n-1),n}})
\]
The Kloosterman sum is defined as
\[
Kl(\psi,\psi',c)=\sum_{\substack{x\in X(c)}}\psi(\mu(x))\psi'(\mu'(x))
\]
In particular, given \(M,\ N,\ D\in \mathbb{Z}^{n-1}\)  if \(c=\)Diag\((1/D_{n-1},D_{n-2}/D_{n-1},...,D_2/D_1,D_1)\), \(\psi=\psi_{\tilde{M}},\ \psi'=\psi_{N}\), the Kloosterman sum obtained appears as the Fourier coefficient \(S_n(M,N;D,w)\) of the GL\((n)\) Poincar\'e Series. One can check [See \cite{G06}] that for \(n=2\), and \(w=\begin{pmatrix}
    &-1\\1
\end{pmatrix}\), the only non-trivial Weyl element for \(GL(2)\), we get the classical Kloosterman sum given in \cref{explicit2}.

It is known that the Kloosterman is zero unless \(w\) is a block anti-diagonal matrix of the form \(w_0=\begin{pmatrix}
    &&&I_{i_1}\\&&\iddots\\&I_{i_{k-1}}\\I_{i_k}
\end{pmatrix}\) with \(I_{i_1}+\cdots+I_{i_{k-1}}+I_{i_k}=n\).
One extreme case for the Weyl element is given by \(w=\begin{pmatrix}
    &(-1)^{n+1}\\I_{n-1}
\end{pmatrix}\), which gives rise to a hyper Kloosterman sum, where square root cancellation has been achieved by Pierre Deligne as a consequence of his proof of Riemann Hypothesis for varieties over finite fields.

The other extreme case for Weyl element is \(w=w_0=\begin{pmatrix}
    &&&&(-1)^{n+1}\\&&&\iddots\\&&1\\&-1\\1
\end{pmatrix}\), the long Weyl element. We know specialize to this case and assume that \(\Gamma=\text{SL}(n,\mathbb{Z})\). Henceforth, whenever \(w\) is the long Weyl element, we will write \(S_n(M,N;D,w)\) as \(S(M,N;D)\). In this case, \(U_w=U\), i.e., the entire unipotent radical survives the intersection. Before we give our expression for this Kloosterman sum, let us recall the Cauchy-Binet Formula.

\begin{lemma}[See \cite{HJ12}]
    \label{CauchyBinet}
    \textbf{(Cauchy-Binet Formula)}
    Let \(A,\ B\in M(n,\mathbb{R})\) and \(C=AB\). \(I,\ J \) be ordered collections of \(k\) rows and columns, respectively. Then
    \[
        \Lambda_I^J(C)=\sum_{K}\Lambda_I^K(A)\Lambda_K^J(B)
    \]
    where \(K\) runs over all ordered collections of \(k\) many strictly increasing column indices of A (and correspondingly rows of \(B\)).
\end{lemma}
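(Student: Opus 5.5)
The plan is to prove the Cauchy--Binet formula by viewing the minor \(\Lambda_I^J(C)\) as a multilinear alternating function of selected rows and columns. Write \(I=(i_1,\dots,i_k)\) and \(J=(j_1,\dots,j_k)\). The \(k\times k\) submatrix \(C_{I}^{J}\) of \(C=AB\) factors as \(A_I\,B^J\). Here \(A_I\) is the \(k\times n\) matrix of rows \(i_1,\dots,i_k\) of \(A\), and \(B^J\) is the \(n\times k\) matrix of columns \(j_1,\dots,j_k\) of \(B\). This reduces the statement to the rectangular identity
\[
\det(PQ)=\sum_{K}\det(P^{K})\det(Q_{K}),
\]
for \(P\) of size \(k\times n\) and \(Q\) of size \(n\times k\). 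In this identity \(K\) ranges over strictly increasing \(k\)-subsets of \(\{1,\dots,n\}\), \(P^K\) keeps the columns \(K\), and \(Q_K\) keeps the rows \(K\).

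For the rectangular identity, expand column by column. The \(m\)-th column of \(PQ\) is \(\sum_{l=1}^n Q_{l,m}\,p_l\), where \(p_l\) denotes the \(l\)-th column of \(P\). Multilinearity of the determinant in columns gives
\[
\det(PQ)=\sum_{l_1,\dots,l_k=1}^{n} Q_{l_1,1}\cdots Q_{l_k,k}\,\det(p_{l_1},\dots,p_{l_k}).
\]

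Next, use that the determinant is alternating. Every term with a repeated index vanishes, so only injective tuples \((l_1,\dots,l_k)\) survive. Group these according to their underlying set \(K=\{\kappa_1<\dots<\kappa_k\}\), writing \(l_m=\kappa_{\sigma(m)}\) for a unique \(\sigma\in S_k\). Reordering the columns gives \(\det(p_{l_1},\dots,p_{l_k})=\operatorname{sgn}(\sigma)\det(P^K)\). The contribution of \(K\) is therefore
\[
\det(P^K)\sum_{\sigma\in S_k}\operatorname{sgn}(\sigma)\prod_{m=1}^k Q_{\kappa_{\sigma(m)},m}.
\]
By the Leibniz formula, this inner sum is exactly \(\det(Q_K)\).

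Summing over \(K\) gives the rectangular identity. Substituting \(P=A_I\) and \(Q=B^J\) yields \(\Lambda_I^J(C)=\sum_K\Lambda_I^K(A)\Lambda_K^J(B)\), as claimed.

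The only delicate point is the sign bookkeeping in the regrouping step. One must check that the same permutation \(\sigma\) appears both in the column reordering of \(P\) and in the Leibniz expansion of \(\det(Q_K)\). This holds because \(\sigma\) is defined by the single relation \(l_m=\kappa_{\sigma(m)}\), which governs both.

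The argument uses only multilinearity, the alternating property, and the Leibniz formula. It therefore holds over any commutative ring, in particular over \(\mathbb{Z}\) and \(\mathbb{Z}/D\mathbb{Z}\), where it will be needed later.
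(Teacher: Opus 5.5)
Your proof is correct. You factor \(C_I^J=A_IB^J\), expand \(\det(PQ)\) by multilinearity in the columns, discard non-injective index tuples, and regroup by the underlying increasing set \(K\). The single permutation \(\sigma\) then governs both the column-reordering sign and the Leibniz expansion of \(\det(Q_K)\). This is the standard textbook argument, and your sign bookkeeping is right.

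The paper gives no proof of this lemma; it only quotes the formula from Horn--Johnson. Your write-up therefore supplies a self-contained justification of something the paper takes as given. It also works with \(I\) and \(J\) in arbitrary order, as the statement allows.

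One side remark of yours needs correcting. In the paper the formula is applied only to \(\gamma=b_1'b_2'\), whose factors have real, non-integral entries. The case actually used is therefore the real one, and your extension to \(\mathbb{Z}\) and \(\mathbb{Z}/D\mathbb{Z}\) is harmless extra generality rather than something needed later.
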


Using this as the key ingredient, we can give an alternate expression for \(S_n(M,N;D)\).

\begin{proposition}\label{mainformula}
For \(D\in \mathbb{\mathbb{N}}^{n-1}\), let \(\mathbb{Z}_D=\mathbb{Z}/D_1\mathbb{Z} \times \dots \times \mathbb{Z}/D_{n-1}\mathbb{Z}\) and
\[
\tilde{X}_n(D) = \left\{ (x, y)\in \mathbb{Z}_D^2\middle|\ \exists\ \gamma \in SL(n, \mathbb{Z}) \text{ with } \begin{array}{r} \Lambda_{[n-i+1]_n}^{[i]}(\gamma) = D_{i}, \\ \Lambda^{[i]}_{n-i,[n-i+2]_n}(\gamma) \equiv x_i \pmod{D_{i}}, \\ \Lambda_{[n-i+1]_n}^{[i-1],i+1}(\gamma) \equiv y_i \pmod{D_{i}} \end{array}\ i=1,\cdots,n-1 \right\}
\]
Then,\begin{equation}
S_n(M,N;D)=\sum_{(x,y)\in \tilde{X}_n(D)}e\left(\sum_{i=1}^{n-1}\frac{M_ix_{i}+N_iy_{i}}{D_i}\right)
\label{formula}    
\end{equation}

\end{proposition}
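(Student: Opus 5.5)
Starting from the definition, I will write every element of \(C(c)\) as \(\gamma = u\,c\,w_0\,u'\) with \(u,u'\in U(\mathbb{R})\). Here \(c=\mathrm{Diag}(1/D_{n-1},\dots,D_1)\) and \(w_0\) is the long Weyl element. Since \(U_{w_0}=U\), the set \(X(c)\) is \(U(\mathbb{Z})\backslash C(c)/U(\mathbb{Z})\). The summand is \(\psi_{\tilde M}(u)\psi_N(u')\), which depends only on the superdiagonal entries \(u_{j,j+1}\) and \(u'_{j,j+1}\) modulo \(1\). The plan is to express these superdiagonal entries, together with the invariants \(D_i\), as ratios of minors of \(\gamma\), using Lemma~\ref{CauchyBinet} applied to the product \(u\cdot(cw_0)\cdot u'\).

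\textbf{Step 1: the \(D_i\) as bottom-left minors.} Consider the minor on the bottom \(i\) rows \([n-i+1]_n\) and the first \(i\) columns \([i]\). The bottom rows of the upper triangular unipotent \(u\) are supported only on the bottom-right block, and the first columns of \(u'\) only on the top-left block. So Cauchy--Binet collapses to one term, \(\Lambda_{[n-i+1]_n}^{[i]}(\gamma)=\Lambda_{[n-i+1]_n}^{[i]}(cw_0)\). This is \(D_i\) up to sign, and the signs in \(w_0\) are chosen to make it exactly \(D_i\). These minors are invariant under \(\gamma\mapsto \gamma_1\gamma\gamma_2\) with \(\gamma_j\in U(\mathbb{Z})\), by the same one-term collapse.

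\textbf{Step 2: the characters.} Next, shift one index.

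\begin{itemize}
\item Replace the top row of the row block by row \(n-i\), giving \(\Lambda^{[i]}_{n-i,[n-i+2]_n}(\gamma)\). Then Cauchy--Binet on the left factor leaves two terms: the original minor and a term carrying \(u_{n-i,n-i+1}\). So this minor equals \(D_i\,u_{n-i,n-i+1}\) plus a contribution that should vanish (or be controlled) because of the triangular shape.
\item Similarly, shift the last column from \(i\) to \(i+1\), giving \(\Lambda^{[i-1],i+1}_{[n-i+1]_n}(\gamma)\). Its expansion gives \(D_i\,u'_{i,i+1}\).
\end{itemize}

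Dividing by \(D_i\), the phase becomes \(e\bigl(\sum_i (M_i x_i+N_i y_i)/D_i\bigr)\). The reversal \(\tilde M\) in \(\psi_{\tilde M}\) matches the index \(n-i\). I will also check that changing \(\gamma\) within its double coset changes \(x_i,y_i\) by multiples of \(D_i\). Left multiplication by \(U(\mathbb{Z})\) adds integer combinations of rows of lower index, so \(x_i\) shifts by an integer multiple of \(\Lambda_{[n-i+1]_n}^{[i]}=D_i\) and integer minors. Right multiplication does the same for the columns.

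\textbf{Step 3: a bijection onto \(\tilde X_n(D)\).} Finally I must show that the map \(X(c)\to\tilde X_n(D)\), \([\gamma]\mapsto(x,y)\), is a bijection. Well-definedness is Step 2.

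\begin{itemize}
\item \emph{Surjectivity.} A \(\gamma\in SL(n,\mathbb{Z})\) with the prescribed bottom-left minors \(D_i\ne0\) lies in the big Bruhat cell \(U c' w_0 U\). Its torus part \(c'\) is determined by those minors, so \(c'=c\).
\item \emph{Injectivity.} This is the main obstacle. I must show that two integral \(\gamma\) in the big cell with the same \(x_i,y_i\pmod{D_i}\) differ by \(U(\mathbb{Z})\) on both sides.
\end{itemize}

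For injectivity, I would first try to reduce to the known parametrization in \cite{G06}, Ch.~11. There the double coset is determined by the first-row and last-column Plücker-type data of \(\gamma\) modulo the \(D_i\). The goal is to show that this data is recovered from \(x_i,y_i\) via the Plücker relations implicit in Cauchy--Binet. Failing that, I would do an inductive argument on \(n\): use \(U(\mathbb{Z})\) to reduce \(\gamma\) to a normal form, and compare the \(u_{j,j+1}\) modulo \(1\) with the remaining entries of \(u\).

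This step is the one where counting the full \(u\) (not just its superdiagonal) versus the data \((x,y)\) is genuinely delicate. If necessary, I would interpret the sum over \(\tilde X_n(D)\) as a pushforward of the sum over \(X(c)\) and verify the fibers have size one.
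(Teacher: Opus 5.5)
Your Steps 1 and 2 are the paper's own argument. Cauchy--Binet applied to \(u\cdot(cw_0)\cdot u'\) collapses to one term for \(\Lambda^{[i]}_{[n-i+1]_n}(\gamma)=D_i\), and to two terms for the shifted minors, giving \(x_i=D_i\,u_{n-i,n-i+1}\) and \(y_i=D_i\,u'_{i,i+1}\). The extra term you hedged about is exactly zero: it is a minor of \(cw_0u'\) that uses row \(i+1\) of \(u'\) in columns \(1,\dots,i\). Your index \(n-i\) is also more accurate than the paper's \(\alpha_{i,i+1}\).

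\textbf{The gap.} The real gap is Step 3, and it cannot be filled. The injectivity you single out as the main obstacle is false, so the proposition as stated is false. The paper's proof makes the same unjustified leap. It asserts that the collected values form ``precisely \(\tilde X_n(D)\)'' and then sums over that set without multiplicity.

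\textbf{A counterexample.} Take \(n=3\) and \(D=(3,3)\). Then \(x_1=\gamma_{21}\), \(y_1=\gamma_{32}\), \(x_2=\Lambda^{1,2}_{1,3}(\gamma)\) and \(y_2=\Lambda^{1,3}_{2,3}(\gamma)\). Consider
\[
\gamma=\begin{pmatrix}2&0&1\\0&-1&0\\3&0&1\end{pmatrix},\qquad
\gamma'=\begin{pmatrix}1&0&1\\3&-1&2\\3&0&2\end{pmatrix}.
\]
\begin{itemize}
\item Both lie in \(SL(3,\mathbb{Z})\), with \(\gamma_{31}=\Lambda^{1,2}_{2,3}=3\) for each, so both lie in \(C(c)\).
\item Both give \((x,y)\equiv(0,0,0,0)\pmod 3\).
\item They lie in different double cosets. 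Left multiplication by \(U(\mathbb{Z})\) fixes the bottom row. Right multiplication by \(u\in U(\mathbb{Z})\) sends \((3,0,1)\) to \((3,3u_{12},1+3u_{13})\), which is never \((3,0,2)\).
\end{itemize}

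In Bruhat coordinates, every superdiagonal entry of \(u\) and \(u'\) is an integer for both matrices. They differ only in the \((1,3)\) entries: \(2/3,\,1/3\) for \(\gamma\) versus \(1/3,\,2/3\) for \(\gamma'\). This is exactly the ``full \(u\) versus superdiagonal'' problem you worried about.

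It is also why your Pl\"ucker route cannot work. The bottom-row coordinate \(\gamma_{33}\) is \(1\) versus \(2\), and it is well defined modulo \(3\) because \(\gamma_{32}=0\). It separates the two double cosets but is not recorded in \((x,y)\).

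More generally, for \(D=(p,p)\) the \(p-1\) double cosets with \(y=0\) all map to \((x,y)=0\); these have bottom row \((p,0,C_1)\) with \(p\nmid C_1\). All other fibres are singletons. With \(M=N=0\), the left side of the claimed formula is \(|X(c)|=(p-1)(2p-1)\), while the right side is \(|\tilde X_3(p,p)|=2(p-1)^2+1\). For \(p=3\) this is \(10\) versus \(9\).

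\textbf{What your argument does prove.} Steps 1--2 establish the correct statement:
\[
S_n(M,N;D)=\sum_{[\gamma]\in X(c)} e\Bigl(\sum_{i}\frac{M_ix_i(\gamma)+N_iy_i(\gamma)}{D_i}\Bigr).
\]
Equivalently, it is the sum over \(\tilde X_n(D)\) with each point weighted by the number of double cosets in its fibre. Any later argument that only produces bijections of the set \(\tilde X_n(D)\) must be redone so that it respects these multiplicities.
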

\begin{proof}
Let \(c=\begin{pmatrix}
    1/D_{n-1}&&&\\
    &D_{n-1}/D_{n-2}&&\\
    &&\ddots&&\\
    &&&D_2/D_1&\\
    &&&&D_1
\end{pmatrix}\),\ \(w_0=\begin{pmatrix}
    &&&&(-1)^{n+1} \\&&&\iddots\\&&1\\&-1\\1
\end{pmatrix}\)
Consider \(b_1=\begin{pmatrix}
    1&\alpha_{1,2}&\cdots&\alpha_{1,n-1}&\alpha_{1,n}\\&1&\cdots&\alpha_{2,n-1}&\alpha_{2,n}\\&&\ddots&\vdots&\vdots\\&&&1&\alpha_{n-1,n}\\&&&&1
\end{pmatrix},b_2=\begin{pmatrix}
    1&\beta_{1,2}&\cdots&\beta_{1,n-1}&\beta_{1,n}\\&1&\cdots&\beta_{2,n-1}&\beta_{2,n}\\&&\ddots&\vdots&\vdots\\&&&1&\beta_{n-1,n}\\&&&&1
\end{pmatrix}\in U(\mathbb{R})\)
and \(\gamma\in SL(n,\mathbb{Z)}\) such that \(\gamma=b_1cw_0b_2\).

Post-multiplying by \(c\) scales the columns of \(b_1\), while pre-multiplying by \(w_0\) permutes (and changes the signs) of the rows of \(b_2\), hence
\[
\gamma=\begin{pmatrix}
    \frac{1}{D_{n-1}}&\frac{D_{n-2}\alpha_{1,2}}{D_{n-1}}&\cdots&\frac{D_{2}\alpha_{1,3}}{D_{1}}&D_1\alpha_{1,n}\\&\frac{D_{n-2}}{D_{n-1}}&\cdots&\frac{D_{2}\alpha_{2,3}}{D_{1}}&D_1\alpha_{2,n}\\&&\ddots&\vdots&\vdots\\&&&\frac{D_2}{D_1} &D_1\alpha_{n-1,n}\\&&&&D_1
\end{pmatrix}\begin{pmatrix}
    &&&&(-1)^{n+1}\\&&&(-1)^n&(-1)^n\beta_1\\&&\iddots&\vdots&\vdots\\&-1&-\beta_{2,3}&\cdots&-\beta_{2,n}\\1&\beta_{1,2}&\beta_{1,3}&\cdots&\beta_{1,n}
\end{pmatrix}=b_1'b_2'
\]

Now, since \(\gamma\) is expressed as the product of two matrices, we can use Cauchy-Binet formula to calculate the minors of \(\gamma\).
\[
\Lambda_{[n-i+1]_n}^{[i]}(\gamma)=\sum_{\substack{|J|=i}}\Lambda_{[n-i+1]_n}^{J}(b_1')\Lambda_J^{[i]}(b_2')
\]
Observe that for the bottom-most \(i\) rows of \(b_1'\), only the right-most \(i\) columns are non-zero. Thus for any ordered collection \(J\) of \(i\) columns, the determinant \(\Lambda_{[n-i+1]_n}^{J}(b_1')\) is zero unless \(J=[n-i+1]_n\). In that case, \(\Lambda_{[n-i+1]_n}^{[n-i+1]_n}(b_1')\) and \(\Lambda_{[n-i+1]_n}^{[i]}\) can be evaluated as a determinant of an upper-triangular and a lower-right-triangular matrix respectively,
\[
\Lambda_{[n-i+1]_n}^{[n-i+1]_n}(b_1')=\frac{D_{i}}{D_{i-1}}\Lambda_{[n-i]_n}^{[n-i]_n}=\frac{D_i}{D_{i-1}}\frac{D_{i-1}}{D_{i-2}}\Lambda_{[n-i-1]_n}^{[n-i-1]_n}=\cdots=\frac{D_i}{D_{i-1}}\times\frac{D_i}{D_{i-1}}\times\cdots\times\frac{D_2}{D_{1}}\times D_1=D_i 
\]
\[
\Lambda_{[n-i+1]_n}^{[i]}(b_2')=(-1)^{i}(-1)^i\Lambda_{[n-i+2]_n}^{[i-1]}=\Lambda_{[n-i+2]_n}^{[i-1]}\cdots=1
\]
Hence, \(\Lambda_{[n-i+1]_n}^{[i]}(\gamma)=D_{n-i+1}\).
Similarly,
\[
\Lambda_{[n-i+1]_n}^{[i-1],i+1}(\gamma)=\sum_{\substack{|J|=i}}\Lambda_{[n-i+1]_n}^{J}(b_1')\Lambda_J^{[i-1],i+1}(b_2')
\]
Again, the only non-zero term in the sum corresponds to \(J=[n-i+1]_n\), in that case we can similarly calculate,
\[
\Lambda_{[n-i+1]_n}^{[i-1][i+1]}(b'_2)=\beta_{i,i+1}
\]
And therefore,
\[
\beta_{i,i+1}=\frac{\Lambda_{[n-i+1]_n}^{[i-1],i+1}(\gamma)}{D_i}
\]
Similarly,
\[
\alpha_{i,i+1}=\frac{\Lambda_{n-i,[n-i+2]_n}^{[i]}(\gamma)}{D_i}
\]
Hence,
\[
S(M,N;D)=\sum_{\gamma\in U(\mathbb{Z})\backslash C(c)/U(\mathbb{Z})}e\left(\sum_{i=1}^{n-1}\frac{M_i\Lambda_{n-i,[n-i+2]_n}^{[i]}(\gamma)+N_i\Lambda_{[n-i+1]_{n}}^{[i-1],i+1}(\gamma)}{D_i}\right)
\]
Now since, \(\gamma\) varies over a double coset of \(U(\mathbb{Z})\), the \(\Lambda_{[n-i+1]_n}^{[i-1],i+1}(\gamma),\ \Lambda_{n-i,[n-i+2]_n}^{[i]}(\gamma)\) vary modulo \(D_i\). If we collect all the possible values these \(2i\) determinants can take in \(\mathbb{Z}_D^2\), the resulting set is precisely \(\tilde{X}_n(D)\). Hence, the Kloosterman sum takes the form,
\[
S_n(M,N;D)=\sum_{(x,y)\in \tilde{X}_n(D)}e\left(\sum_{i=1}^{n-1}\frac{M_ix_{i}+N_iy_{i}}{D_i}\right)
\]

\end{proof}

\section{Properties}
We proceed to use \cref{mainformula} to prove the generalizations of the properties that were proved in \cite{BFG88}. Some of these properties were also pointed out in \cite{Fr87}. The first observation is immediate.
\begin{proposition}
    The value of \(S_n(M,N;D)\) depends only on the congruence class of each \(M_i\) and \(N_i\) in \(\mathbb{Z}/D_i\mathbb{Z}\).
\end{proposition}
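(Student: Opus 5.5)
The plan is to read the claim directly off the formula \eqref{formula} of \cref{mainformula}. The key observation is that the index set \(\tilde{X}_n(D)\) is defined purely in terms of \(D\). It does not depend on \(M\) or \(N\) at all. So the only place \(M\) and \(N\) enter is the summand
\[
e\left(\sum_{i=1}^{n-1}\frac{M_ix_{i}+N_iy_{i}}{D_i}\right).
\]
It therefore suffices to show that this summand is unchanged, for every fixed \((x,y)\in\tilde{X}_n(D)\), when some \(M_i\) is replaced by \(M_i+k D_i\) or some \(N_i\) by \(N_i+kD_i\) with \(k\in\mathbb{Z}\).

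First, I would fix representatives. The entries \(x_i,y_i\) of a point of \(\tilde{X}_n(D)\) are classes in \(\mathbb{Z}/D_i\mathbb{Z}\). I would lift them to integers, exactly as in the proof of \cref{mainformula}, where they arise as the integer minors \(\Lambda^{[i]}_{n-i,[n-i+2]_n}(\gamma)\) and \(\Lambda_{[n-i+1]_n}^{[i-1],i+1}(\gamma)\). The summand is already well defined in these classes. Changing \(x_i\) by a multiple of \(D_i\) changes \(M_ix_i/D_i\) by an integer, and \(e(\cdot)\) is \(1\)-periodic.

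Second, I would swap the roles. Replacing \(M_i\) by \(M_i+kD_i\) changes the \(i\)-th term of the exponent by
\[
\frac{kD_ix_i}{D_i}=kx_i\in\mathbb{Z},
\]
and similarly for \(N_i\) with \(y_i\). The periodicity \(e(t+m)=e(t)\) for \(m\in\mathbb{Z}\) then shows that each summand is unchanged. Summing over the \(M,N\)-independent set \(\tilde{X}_n(D)\) gives \(S_n(M,N;D)=S_n(M',N';D)\) whenever \(M_i\equiv M_i'\) and \(N_i\equiv N_i'\pmod{D_i}\) for all \(i\).

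There is no real obstacle. The one point to state carefully is that \(\tilde{X}_n(D)\) is independent of \(M\) and \(N\), and that the \(x_i,y_i\) may be taken to be integers, so that \(kx_i\) and \(ky_i\) are integers. Both facts are immediate from the definition in \cref{mainformula}.
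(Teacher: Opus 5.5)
Your proposal is correct and takes the same approach as the paper, which simply notes that the claim is immediate from \eqref{formula}: the index set $\tilde{X}_n(D)$ depends only on $D$, and $e(\cdot)$ is $1$-periodic. Your remark about lifting $x_i, y_i$ to integers, so that $kx_i$ and $ky_i$ are integers, spells out a detail the paper leaves implicit.
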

\begin{proof}
    This is immediate from \cref{formula} and because \(e(z)\) is periodic modulo \(1\).
\end{proof}

We would use the following lemma about the anti-transpose of a matrix to prove that the role of \(M\) and \(N\) can be interchanged. 

\begin{lemma}
    Given any matrix \(A\in M_n(\mathbb{R)}\), \(det(A)=det(A^{at})\)\label{det_at}.
\end{lemma}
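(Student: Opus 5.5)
The plan is to write the anti-transpose as a conjugation and then use multiplicativity of the determinant. Let \(J\) denote the \(n\times n\) reversal permutation matrix, with ones on the anti-diagonal and zeros elsewhere, so that \(J_{i,j}=1\) exactly when \(i+j=n+1\). Reflecting \(A\) across the anti-diagonal sends the entry in position \((i,j)\) to position \((n+1-j,n+1-i)\). Hence
\[
(A^{at})_{i,j}=A_{n+1-j,\,n+1-i}.
\]
First I will check that \(A^{at}=JA^tJ\). Left multiplication by \(J\) reverses the order of the rows, and right multiplication by \(J\) reverses the order of the columns. Therefore
\[
(JA^tJ)_{i,j}=(A^t)_{n+1-i,\,n+1-j}=A_{n+1-j,\,n+1-i},
\]
which matches the formula for \((A^{at})_{i,j}\) above.

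Next I take determinants and use multiplicativity:
\[
\det(A^{at})=\det(J)\,\det(A^t)\,\det(J)=\det(J)^2\det(A).
\]
This uses the standard fact \(\det(A^t)=\det(A)\). Finally, \(J\) is a permutation matrix, so \(\det(J)=\pm1\) and \(\det(J)^2=1\). This gives \(\det(A^{at})=\det(A)\).

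The only step that needs care is the index bookkeeping in verifying \(A^{at}=JA^tJ\); in particular one must confirm that \(J\) appears on both sides and not just one. The sign of \(\det J\), which is \((-1)^{n(n-1)/2}\), never needs to be computed because it enters squared.

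As an alternative, one could argue directly from the Leibniz formula. Under the reindexing \(i\mapsto n+1-i\), a permutation \(\sigma\) corresponds to \(\rho\sigma^{-1}\rho\), where \(\rho\) is the reversal permutation. This map preserves sign, since \(\operatorname{sgn}\) is a class function and invariant under inversion. I would use the matrix factorization as the main proof, since it is shorter.
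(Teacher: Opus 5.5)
Your proof is correct and follows the same route as the paper: you write \(A^{at}=JA^tJ\), where \(J\) is the unsigned reversal matrix (the paper's \(v_o\)), and then use multiplicativity, \(\det(A^t)=\det(A)\), and \(\det(J)^2=1\). Your entrywise check of \(A^{at}=JA^tJ\) fills in the step the paper dismisses as ``easy to see.''
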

\begin{proof}
    Let \(v_o=\begin{pmatrix}
        &&&1\\&&\iddots\\&1\\1
    \end{pmatrix}\) be the ``unsigned'' long Weyl element in \(GL(n,\mathbb{R})\). It is easy to see that
    \[
    A^{at}=v_o A^t v_o
    \]
    Then, it follows that \(det(A^{at})=det(v_o)^2 det(A^t)=det(A)\).
\end{proof}

\begin{proposition}
    \(S_n(M,N;D)=S_n(N,M;D)\)
\end{proposition}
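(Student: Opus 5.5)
We use the involution \(\gamma\mapsto\gamma^{at}\) on \(SL(n,\mathbb{Z})\) to show that the set \(\tilde{X}_n(D)\) of \cref{mainformula} is symmetric under \((x,y)\mapsto(y,x)\). Given that, \cref{formula} gives
\[
S_n(M,N;D)=\sum_{(x,y)\in\tilde X_n(D)} e\left(\sum_i \frac{M_ix_i+N_iy_i}{D_i}\right)=\sum_{(y,x)\in\tilde X_n(D)} e\left(\sum_i\frac{M_ix_i+N_iy_i}{D_i}\right)=S_n(N,M;D),
\]
after renaming variables.

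First, \(\gamma^{at}=v_o\gamma^tv_o\). Since \(\det v_o^2=1\), \(\gamma^{at}\in SL(n,\mathbb{Z})\), and the map is an involution. The entry \((\gamma^{at})_{k,l}\) equals \(\gamma_{n+1-l,\,n+1-k}\). Hence a minor of \(\gamma^{at}\) with row set \(I\) and column set \(J\) is, up to reordering rows and columns, the minor of \(\gamma\) with rows \(\{n+1-j: j\in J\}\) and columns \(\{n+1-i: i\in I\}\). By \cref{det_at}, applied to the submatrix, reversing both orders costs no sign. Concretely, the square submatrix of \(\gamma^{at}\) on \((I,J)\) is the anti-transpose of the submatrix of \(\gamma\) on \((n+1-J,\,n+1-I)\), with indices taken in increasing order. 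So the two minors are equal.

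Next, we apply this to the three families of minors in the definition of \(\tilde X_n(D)\).

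\textbf{The \(D_i\) condition.} The minor \(\Lambda^{[i]}_{[n-i+1]_n}(\gamma^{at})\) uses rows \(n-i+1,\dots,n\) and columns \(1,\dots,i\). It corresponds to rows \(n+1-\{1,\dots,i\}=\{n-i+1,\dots,n\}\) and columns \(\{1,\dots,i\}\) of \(\gamma\). So this condition is preserved.

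\textbf{The \(y\)-minor.} The minor \(\Lambda^{[i-1],i+1}_{[n-i+1]_n}(\gamma^{at})\) has columns \(\{1,\dots,i-1,i+1\}\). These become rows \(\{n-i,\,n-i+2,\dots,n\}\) of \(\gamma\), and the rows \([n-i+1]_n\) become columns \([i]\). The result is exactly \(\Lambda^{[i]}_{n-i,[n-i+2]_n}(\gamma)\), the \(x\)-minor.

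\textbf{The \(x\)-minor.} Symmetrically, the \(x\)-minor of \(\gamma^{at}\) is the \(y\)-minor of \(\gamma\).

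Therefore, if \(\gamma\) witnesses \((x,y)\in\tilde X_n(D)\), then \(\gamma^{at}\) witnesses \((y,x)\), with the same \(D\). Since the map is an involution, this gives a bijection of \(\tilde X_n(D)\) onto itself swapping the two coordinates, and the displayed identity completes the proof.

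The main obstacle is the sign and ordering bookkeeping in the minor identity. We must check that the submatrix of \(\gamma^{at}\) on \((I,J)\), with increasing indices, is literally the anti-transpose of the corresponding submatrix of \(\gamma\). Then \cref{det_at} gives equality with no sign, rather than a permuted version that could introduce a sign \((-1)^{\cdots}\). I would verify this directly from the formula \((\gamma^{at})_{k,l}=\gamma_{n+1-l,n+1-k}\): increasing \(k\) gives decreasing column index, which is precisely the anti-transpose pattern.
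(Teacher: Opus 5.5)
Your proof is correct and is essentially the paper's argument: replace \(\gamma\) by \(\gamma^{at}\), note that it stays in \(SL(n,\mathbb{Z})\), and apply \cref{det_at} to the relevant submatrices to see that the \(D_i\)-minor is fixed while the \(x\)- and \(y\)-minors are interchanged. Your explicit identity \((\gamma^{at})_{k,l}=\gamma_{n+1-l,\,n+1-k}\) shows that the \((I,J)\) submatrix of \(\gamma^{at}\) is the anti-transpose of the \((n+1-J,\,n+1-I)\) submatrix of \(\gamma\), which makes precise the ordering and sign step that the paper only asserts.
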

\begin{proof}
    From \eqref{formula}, it is enough to show that if \((x,y)\in \tilde{X}_{n}(D)\), then \((y,x)\in \tilde{X}_{n}(D)\). Let \(\gamma\in SL(n,\mathbb{Z})\) be the matrix corresponding to \((x,y)\in \mathbb{Z}_D^2\). We claim that \(\gamma^{at}\in SL(n,\mathbb{Z})\) which corresponds to \((y,x)\in \mathbb{Z}_D^2\).

    Clearly, by \cref{det_at}, \(det(\gamma^{at})=det(\gamma)=1\), and the entries of \(\gamma^{at}\) are integers, so \(\gamma^{at}\) indeed belongs to \(SL(n,\mathbb{Z})\). Note that 
    \[\Lambda_{[n-i+1]_n}^{[i]}(\gamma^{at})=det({\mathcal{M}_{[n-i+1]_n}^{[i]}(\gamma^{at}}))=det((\mathcal{M}_{[n-i+1]_n}^{[i]}(\gamma))^{at})=det(\mathcal{M}_{[n-i+1]_n}^{[i]}(\gamma))=D_{i}\]
    \begin{align*}
       \Lambda^{[i]}_{n-i,[n-i+2]_n}(\gamma^{at})&=det((\mathcal{M}_{n-i,[n-i+2]_n}^{[i]}(\gamma))^{at})=det((\mathcal{M}_{[n-i+1]_n}^{[i-1],i+1}(\gamma))^{at})\\&=\Lambda^{[i-1],i+1}_{[n-i+1]_n}(\gamma)\equiv y_i\pmod{D_{i}}
    \end{align*}
    Similarly, 
    \[ 
    \Lambda^{[i-1],i+1}_{[n-i+1]_n}(\gamma^{at})=\Lambda^{[i]}_{n-i,[n-i+2]_n}(\gamma^{at})\equiv x_i\pmod{D_{i}}
    \]
    Thus, \(\gamma^{at}\) satisfies the criteria for \((y,x)\) to be in \(\tilde{X}_n(D)\). 
\end{proof}

Our next target is to prove that we can reverse the co-ordinates of each \(M,N\) and \(D\) without changing the value of \(S(M,N;D)\). For this we would need to use another well-known result from linear algebra relating the minors of a matrix to the minors of it's inverse.  

\begin{lemma}[See \cite{HJ12}]
    \label{Jacobi}
    \textbf{(Jacobi's Complementary Minor Formula)}
    
    Let \(A\in GL(n,\mathbb{R})\), \(I,\ J \) be ordered collections of \(k\) rows and columns, respectively, let \(I',\ J'\) be the complements of \(I,\ J\), respectively, and let \(\sum I=\sum_{i\in I}i\), \(\sum J=\sum_{j\in J}j\). Then 
    \[
    \Lambda_{I}^J(A)=(-1)^{\sum I + \sum J}det(A)\Lambda_{I'}^{J'}(A^{-1})
    \]
\end{lemma}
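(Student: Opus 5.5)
The plan is to prove Jacobi's formula by reducing it, via row and column permutations, to the case where both \(I\) and \(J\) are the leading index set \([k]=\{1,\dots,k\}\). That special case follows from a block-matrix factorization, and the sign \((-1)^{\sum I+\sum J}\) is exactly the sign the permutations contribute.

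\emph{Step 1 (the case \(I=J=[k]\)).} Write
\[
A=\begin{pmatrix} P & Q\\ R & S\end{pmatrix}, \qquad A^{-1}=\begin{pmatrix} P' & Q'\\ R' & S'\end{pmatrix},
\]
where \(P\) and \(P'\) are \(k\times k\). From \(AA^{-1}=I_n\) we get \(PQ'+QS'=0\) and \(RQ'+SS'=I_{n-k}\). These give the identity
\[
A\begin{pmatrix} I_k & Q'\\ 0 & S'\end{pmatrix}=\begin{pmatrix} P & 0\\ R & I_{n-k}\end{pmatrix}.
\]
Taking determinants of both sides yields \(\det(A)\det(S')=\det(P)\). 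In the notation of the statement this reads \(\Lambda_{[k]}^{[k]}(A)=\det(A)\,\Lambda_{I'}^{J'}(A^{-1})\) with \(I'=J'=\{k+1,\dots,n\}\). The sign is correct, since \(\sum I+\sum J=k(k+1)\) is even. This argument works over \(\mathbb{R}\) with no assumption that \(P\) is invertible.

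\emph{Step 2 (reduction).} For general \(I=\{i_1<\dots<i_k\}\), let \(\sigma\) be the permutation matrix that moves the rows indexed by \(I\) to the top, keeping their order, followed by the rows of \(I'\) in their order. Define \(\tau\) analogously for the columns indexed by \(J\), and set \(B=\sigma A\tau\).

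Moving \(i_1,\dots,i_k\) to positions \(1,\dots,k\) takes \(\sum_m (i_m-m)\) adjacent transpositions. Hence \(\det\sigma=(-1)^{\sum I-k(k+1)/2}\), and similarly \(\det\tau=(-1)^{\sum J-k(k+1)/2}\). It follows that \(\det B=(-1)^{\sum I+\sum J}\det A\).

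Because the orders within \(I\), \(I'\), \(J\), \(J'\) are preserved, the leading \(k\times k\) block of \(B\) has determinant \(\Lambda_I^J(A)\). Next, \(B^{-1}=\tau^{-1}A^{-1}\sigma^{-1}\). In \(A^{-1}\), rows are indexed by column indices of \(A\) and columns by row indices of \(A\), so \(\tau^{-1}\) moves the rows of \(J\) of \(A^{-1}\) to the top and \(\sigma^{-1}\) moves the columns of \(I\) to the left. The trailing block of \(B^{-1}\) is therefore the submatrix of \(A^{-1}\) with rows \(J'\) and columns \(I'\), again in increasing order.

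\emph{Step 3 (conclusion).} Applying Step 1 to \(B\) gives
\[
\Lambda_I^J(A)=(-1)^{\sum I+\sum J}\det(A)\,\Lambda_{J'}^{I'}(A^{-1}).
\]
The main obstacle is exactly this index bookkeeping. The complementary minor of the inverse naturally has rows \(J'\) and columns \(I'\), which is the transpose of the labelling written in the statement.

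I would therefore read the lemma's \(\Lambda_{I'}^{J'}(A^{-1})\) with the convention that the indices are swapped as in \cite{HJ12}. It is stated correctly when \(I=J\), and more generally it holds for the transposed labelling. In later applications, such as the reversal symmetry with \(\gamma^{-1}\), I would track this swap carefully. Combined with the anti-transpose trick of \cref{det_at}, the swap converts rows into columns as needed.
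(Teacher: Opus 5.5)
The paper does not prove this lemma; it only cites \cite{HJ12}. Your argument is therefore a correct, self-contained replacement for that citation. The factorization $A\begin{pmatrix} I_k & Q'\\ 0 & S'\end{pmatrix}=\begin{pmatrix} P & 0\\ R & I_{n-k}\end{pmatrix}$ settles the leading case with no invertibility assumption on $P$. The inversion count $\sum_m(i_m-m)$ gives $\det\sigma\det\tau=(-1)^{\sum I+\sum J}$. Finally, $B^{-1}=\tau^{-1}A^{-1}\sigma^{-1}$ correctly identifies the trailing block of $B^{-1}$ as the submatrix of $A^{-1}$ on rows $J'$ and columns $I'$.

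What your proof adds over the bare citation is exactly the point you flag. Under the paper's convention (subscripts index rows), the lemma as printed is false whenever $I\neq J$. Take $n=2$, $A=\begin{pmatrix} a&b\\ c&d\end{pmatrix}$, $I=\{1\}$, $J=\{2\}$. The printed right-hand side is $-\det(A)\,(A^{-1})_{2,1}=c$, while the left-hand side is $b$. The true identity is $\Lambda_I^J(A)=(-1)^{\sum I+\sum J}\det(A)\,\Lambda_{J'}^{I'}(A^{-1})$, which is what you prove.

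Your closing remark overcomplicates the downstream repair. The paper never uses $\gamma^{-1}$, only the transposed minor matrix $\gamma^-$. For $\det A=1$ one has $A^-=EA^{-1}E$ with $E=\mathrm{diag}((-1)^1,\dots,(-1)^n)$. Conjugating by $E$ multiplies $\Lambda_{J'}^{I'}$ by $(-1)^{\sum I'+\sum J'}=(-1)^{\sum I+\sum J}$. This cancels the Jacobi sign and yields the paper's corollary $\Lambda_I^J(A)=\Lambda_{J'}^{I'}(A^-)$. That corollary, and its use in proving $S_n(M,N;D)=S_n(\tilde M,\tilde N;\tilde D)$, are already written in your transposed labelling. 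So they agree with your corrected lemma rather than the printed one, and no anti-transpose trick is needed.
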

If \(A\in SL(n,\mathbb{R})\), the \(det(A)\) in the above expression will not be present, but we will see in the next proposition that we need a matrix, where the statement would be true without the \(-1^{\sum I+\sum J}\) sign factor. For this, we would use the transpose of the matrix of minors of \(A\), which we denote as \(A^{-}\). The \((i,j)^\text{th}\) element of \(A^{-}\) is the \((j,i)^\text{th}\) minor of A given by \(\Lambda_{[n]^{(j)}}^{[n]^{(i)}}(A)\). This is almost equal to \(A^{-1}\), in fact,
\begin{equation}
    A^{-1}_{i,j}=(-1)^{\sum I+\sum J} A^{-}_{i,j} \label{comple}
\end{equation}
\begin{corollary}
    Let \(A\in SL(n,\mathbb{R})\) and \(A^-\) be the transposed matrix of minors of \(A\). Let \(I,\ J \) be ordered collections of \(k\) rows and columns, respectively, let \(I',\ J'\) be the complements of \(I,\ J\), respectively, and let \(\sum I=\sum_{i\in I}i\), \(\sum J=\sum_{j\in J}j\). Then 
    \[
    \Lambda_{I}^J(A)=\Lambda_{J'}^{I'}(A^{-})
    \]
\end{corollary}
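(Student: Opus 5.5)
The plan is to show that \(A^{-}\) differs from \(A^{-1}\) only by a diagonal sign conjugation, and then to let \cref{Jacobi} do the work, checking that the resulting sign factors cancel.

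First I will identify \(A^{-}\) precisely. Since \(\det(A)=1\), the classical adjugate formula gives \(A^{-1}_{i,j}=(-1)^{i+j}\Lambda_{[n]^{(j)}}^{[n]^{(i)}}(A)=(-1)^{i+j}A^{-}_{i,j}\); this is \eqref{comple} with \(I=\{i\},J=\{j\}\). Let \(S=\mathrm{Diag}(-1,1,-1,\dots,(-1)^n)\). Then the entrywise relation is equivalent to the matrix identity
\[
A^{-}=SA^{-1}S.
\]

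Next I will compute minors of \(SA^{-1}S\). Left multiplication by \(S\) multiplies row \(k\) by \((-1)^k\), and right multiplication multiplies column \(l\) by \((-1)^l\). Pulling these factors out of the determinant gives, for any row set \(K\) and column set \(L\) of equal size,
\[
\Lambda_K^L(A^{-})=(-1)^{\sum K+\sum L}\Lambda_K^L(A^{-1}).
\]
Apply this with \(K=J'\) and \(L=I'\).

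Then I will invoke Jacobi's complementary minor formula (\cref{Jacobi}) with \(\det(A)=1\). The form I will use is the standard one, \(\Lambda_I^J(A)=(-1)^{\sum I+\sum J}\Lambda_{J'}^{I'}(A^{-1})\), in which rows and columns of the complementary minor of the inverse are swapped. The statement of \cref{Jacobi} as printed writes \(\Lambda_{I'}^{J'}\), and this is the main point needing care: I will note that the swapped version is the correct one. It also matches the index pattern \(\Lambda_{J'}^{I'}\) in the corollary. Combining the two displays gives
\[
\Lambda_I^J(A)=(-1)^{\sum I+\sum J+\sum I'+\sum J'}\Lambda_{J'}^{I'}(A^{-}).
\]

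Finally, the sign is trivial. Since \(\sum I+\sum I'=\sum J+\sum J'=n(n+1)/2\), the total exponent equals \(n(n+1)\), which is even. This yields \(\Lambda_I^J(A)=\Lambda_{J'}^{I'}(A^{-})\).

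The only genuine obstacle is the bookkeeping of indices and signs. Concretely, this means confirming the transposition convention in the definition of \(A^{-}\) and the row/column swap in Jacobi's formula. I would sanity-check both on \(k=1\), where the claim reduces to the definition of \(A^{-}\) as the minor matrix (without transpose) of \(A^{-}\) recovering \(A\), and on \(k=n-1\).
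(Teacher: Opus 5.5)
Your proof is correct and takes the same route as the paper, which simply cites \cref{Jacobi} together with the sign relation \eqref{comple}; you make the sign bookkeeping explicit through \(A^{-}=SA^{-1}S\) and the evenness of \(\sum I+\sum I'+\sum J+\sum J'=n(n+1)\). You are also right that \cref{Jacobi} as printed should read \(\Lambda_{J'}^{I'}(A^{-1})\) rather than \(\Lambda_{I'}^{J'}(A^{-1})\): for \(k=n-1\) the printed form would force \(\Lambda_{[n]^{(i)}}^{[n]^{(j)}}(A)=\Lambda_{[n]^{(j)}}^{[n]^{(i)}}(A)\), and the swapped form is exactly what the corollary's index pattern needs.
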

\begin{proof}
    This follows directly from \cref{Jacobi} and \cref{comple}.
\end{proof}
\begin{proposition}
    \label{tilde}
    \(S_n(M,N;D)=S_n(\tilde{M},\tilde{N};\tilde{D})\)
\end{proposition}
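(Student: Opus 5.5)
Following the pattern of the proof that \(S_n(M,N;D)=S_n(N,M;D)\), we use \eqref{formula} and reduce to a bijection between index sets. It suffices to show that \((x,y)\in\tilde{X}_n(D)\) if and only if \((\tilde{x},\tilde{y})\in\tilde{X}_n(\tilde{D})\). Reindexing \(i\mapsto n-i\) then turns the phase \(\sum_i (M_ix_i+N_iy_i)/D_i\) into \(\sum_i(\tilde M_i\tilde x_i+\tilde N_i\tilde y_i)/\tilde D_i\). The bijection will come from the map \(\gamma\mapsto\gamma^{-}\), the transposed matrix of minors, possibly composed with the anti-transpose \(\gamma\mapsto\gamma^{at}\) or with conjugation by the unsigned long element \(v_o\). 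If \(\gamma\in SL(n,\mathbb{Z})\), then \(\gamma^-\) has integer entries, and \(\det\gamma^-=\det(\gamma)^{n-1}=1\). Hence \(\gamma^-\in SL(n,\mathbb{Z})\), and the map is an involution up to these sign-free operations.

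The key computation applies the preceding Corollary, \(\Lambda_I^J(\gamma)=\Lambda_{J'}^{I'}(\gamma^-)\), to each of the three families of minors that define \(\tilde X_n(D)\).
\begin{itemize}
\item For \(I=[n-i+1]_n\) (the bottom \(i\) rows) and \(J=[i]\) (the first \(i\) columns), the complements are \(I'=[n-i]\) (the top \(n-i\) rows) and \(J'=[i+1]_n\) (the last \(n-i\) columns). Thus \(D_i=\Lambda^{[i+1]_n}_{[n-i]}(\gamma^-)\), a top-right \((n-i)\times(n-i)\) minor.
\item For \(I=\{n-i\}\cup[n-i+2]_n\) and \(J=[i]\), the complements are \(I'=[n-i-1]\cup\{n-i+1\}\) and \(J'=[i+1]_n\).
\item For \(I=[n-i+1]_n\) and \(J=[i-1]\cup\{i+1\}\), the complements are \(J'=\{i\}\cup[i+2]_n\) and \(I'=[n-i]\).
\end{itemize}
In each case the complementary minor of \(\gamma^-\) is a top-right minor, or one of its two ``shifted by one'' neighbours, of size \(n-i\).

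To convert top-right minors back into the bottom-left shape used in the definition, we apply \(\gamma^-\mapsto v_o\gamma^- v_o\). Reversing the order of both the rows and the columns moves the top-right block to the bottom-left, and it preserves the determinant of a square block, since the sign \(\det(v_o)^2=1\) appears once from the rows and once from the columns. The top-right \((n-i)\)-minor \(\Lambda^{[i+1]_n}_{[n-i]}\) becomes \(\Lambda^{[n-i]}_{[i+1]_n}\), which is exactly the \(D\)-condition at index \(j=n-i\) with value \(D_i=\tilde D_j\). Likewise, the two neighbouring minors become the \(x\)- and \(y\)-type minors at index \(j=n-i\). If the reversal instead swaps the roles of \(x\) and \(y\), we compose further with the anti-transpose, as in the proof that \(S_n(M,N;D)=S_n(N,M;D)\).

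The main obstacle is this index bookkeeping. For each of the \(x\)- and \(y\)-minors of size \(i\), we must verify that its image under the composite map is exactly the correct minor of size \(n-i\), with no sign. We must also confirm whether an extra anti-transpose is needed to match \(x\) with \(\tilde x\) rather than with \(\tilde y\). We will do this carefully for general \(i\), sanity-check it at \(n=3\) against \eqref{explicit3}, and then use the fact that the map is invertible (it is an involution) to obtain the bijection and hence the equality of sums.
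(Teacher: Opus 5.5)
Your reduction to a bijection \(\tilde X_n(D)\to\tilde X_n(\tilde D)\) via \(\gamma\mapsto\gamma^-\) is the paper's route. However, your key computation misapplies the Corollary, and the \(v_o\)-conjugation built on it breaks the argument.

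In \(\Lambda_I^J(\gamma)=\Lambda_{J'}^{I'}(\gamma^-)\) the index sets swap roles: the minor of \(\gamma^-\) has \emph{rows} \(J'\) and \emph{columns} \(I'\). For \(I=[n-i+1]_n\), \(J=[i]\) this gives \(D_i=\Lambda_{[i+1]_n}^{[n-i]}(\gamma^-)\). That is the bottom-left \((n-i)\times(n-i)\) minor, i.e.\ already the \(\tilde D\)-condition at index \(n-i\). You wrote \(\Lambda_{[n-i]}^{[i+1]_n}(\gamma^-)\), with rows \(I'\) and columns \(J'\), which is a different minor.

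Conjugating by \(v_o\) then does real damage. \(\Lambda_K^L(v_oAv_o)\) is the minor of \(A\) on the reversed index sets. So the bottom-left minors of \(v_o\gamma^-v_o\) are the top-right minors of \(\gamma^-\), which by the Corollary are top-right minors of \(\gamma\) and are not constrained by \(D\) at all. Composing with the anti-transpose cannot repair this, since \(A\mapsto A^{at}=v_oA^tv_o\) preserves every \(\Lambda_{[n-k+1]_n}^{[k]}\).

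This already fails for \(n=2\). Take \(\gamma=\begin{pmatrix}a&b\\c&d\end{pmatrix}\), so \(D_1=c\). Then \(\gamma^-=\begin{pmatrix}d&b\\c&a\end{pmatrix}\) has lower-left entry \(c\), but \(v_o\gamma^-v_o=\gamma^t\) has lower-left entry \(b\).

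With the transposition respected, no \(v_o\) is needed:
\begin{itemize}
\item The \(x\)-minor of \(\gamma\) at index \(i\) (\(I=\{n-i\}\cup[n-i+2]_n\), \(J=[i]\)) equals \(\Lambda_{[i+1]_n}^{[n-i-1],n-i+1}(\gamma^-)\), which is the \(y\)-type minor at index \(n-i\).
\item The \(y\)-minor of \(\gamma\) at \(i\) equals \(\Lambda_{i,[i+2]_n}^{[n-i]}(\gamma^-)\), which is the \(x\)-type minor at \(n-i\).
\end{itemize}
So \(\gamma^-\) realizes \((\tilde y,\tilde x)\in\tilde X_n(\tilde D)\), and your suspicion about a swap is correct. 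You can finish either with \((\gamma^-)^{at}\) or by invoking \(S_n(M,N;D)=S_n(N,M;D)\). The paper's displayed computation likewise produces \(\tilde y_i\) for the \(x\)-type minor of \(\gamma^-\), so this symmetry step is tacitly needed there as well.

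The index bookkeeping you postponed is exactly where the proposal currently fails. As written, with the \(v_o\) step, it is not a proof; it needs to be redone along these lines.
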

\begin{proof}
It is enough to prove that if \((x,y)\in \tilde{X}_n(D)\) then \((\tilde{x},\tilde{y})\in \tilde{X}_n(\tilde{D})\). Let \(\gamma\in SL(n,\mathbb{Z})\) be the matrix corresponding to \((x,y)\). We claim \(\gamma^-\in SL(n,\mathbb{Z})\) corresponds to \(\tilde{x},\tilde{y}\in\tilde{X}_n(\tilde{D})\).

The entries of \(A^-\) are minors of \(A\), and hence are integers. It is easy to see that \(det(A^-)=det(Ad(A))=1\), so \(A^-\in SL(n,\mathbb{Z})\). Applying \cref{Jacobi} with \(I=[n-i+1]_n, J=[i]\) gives
\[
    \Lambda_{[n-i+1]_n}^{[i]}(\gamma^{-})=\Lambda_{[i+1]_n}^{[n-i]}(\gamma)=D_{n-i}=\tilde{D}_i
\]
\[
    \Lambda_{n-i,[n-i+2]_n}^{[i]}(\gamma^{-})=\Lambda_{[i+1]_n}^{[n-i-1],n-i+1}(\gamma)\equiv y_{n-i}\equiv \tilde{y}_i \pmod{\tilde{D}_i}
\]
Similarly,
\[
    \Lambda_{[n-i+1]_n}^{[i-1],i+1}(\gamma^{-})=\Lambda_{i,[i+2]_n}^{[n-i]}(\gamma)\equiv x_{n-i}\equiv \tilde{x}_i \pmod{\tilde{D}_i}
\]
Thus, \(\gamma^{-}\) satisfies the criteria for \((\tilde{x},\tilde{y})\) to be in \(\tilde{X}_n(\tilde{D})\). 
\end{proof}
We next prove the degeneracy result that if certain moduli \(D_i\)'s are zero then the Kloosterman sum reduces to a lower rank Kloosterman sum. First we assume only the last \(D_i\) is zero, in that case the last component of \(M\) and \(N\) gets removed in the resulting Kloosterman sum.
\begin{proposition}
    \label{deg1}
    If \(D_{n-1}=1\) then \(S_n(M,N;D)=S_{n-1}(M^{(n-1)},N^{(n-1)};D^{(n-1)})\)
\end{proposition}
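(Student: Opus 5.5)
The plan is to show that, when \(D_{n-1}=1\), the set \(\tilde{X}_n(D)\) with its (trivial) last coordinate dropped is in bijection with \(\tilde{X}_{n-1}(D^{(n-1)})\). The \(i=n-1\) term of \eqref{formula} is \(e((M_{n-1}x_{n-1}+N_{n-1}y_{n-1})/1)=1\), and \(x_{n-1},y_{n-1}\in\mathbb{Z}/1\mathbb{Z}\) carry no information. So by \cref{mainformula}, \(S_n(M,N;D)\) becomes a sum over the pairs \((x^{(n-1)},y^{(n-1)})\) of the phase \(e\big(\sum_{i\le n-2}(M_ix_i+N_iy_i)/D_i\big)\). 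Once the bijection is established, this is exactly the right-hand side of \eqref{formula} for \(S_{n-1}(M^{(n-1)},N^{(n-1)};D^{(n-1)})\).

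The first step is to normalize \(\gamma\). Take \(\gamma\in SL(n,\mathbb{Z})\) realizing \((x,y)\), and let \(B\) be its bottom-left \((n-1)\times(n-1)\) block, in rows \(2,\dots,n\) and columns \(1,\dots,n-1\). By hypothesis \(\det B=\Lambda^{[n-1]}_{[2]_n}(\gamma)=D_{n-1}=1\), so \(B\in SL(n-1,\mathbb{Z})\).
\begin{itemize}
\item Left multiplication by \(U(\mathbb{Z})\) adds integer multiples of lower rows to upper rows. Since \(B\) is invertible over \(\mathbb{Z}\), this lets us kill the first \(n-1\) entries of row \(1\).
\item Right multiplication by \(U(\mathbb{Z})\) adds multiples of earlier columns to later ones. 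Again using invertibility of \(B\), this lets us kill entries \(2,\dots,n\) of column \(n\).
\end{itemize}
The result is \(\gamma'=\begin{pmatrix}0&\varepsilon\\ B&0\end{pmatrix}\), and \(\det\gamma'=1\) forces \(\varepsilon=(-1)^{n-1}\). As in the proof of \cref{mainformula}, the relevant minors are constant modulo \(D_i\) on the double coset \(U(\mathbb{Z})\gamma U(\mathbb{Z})\), so \(\gamma'\) realizes the same \((x,y)\). I will cite that remark, and if needed double-check it via \cref{CauchyBinet} applied to \(u_1\gamma u_2\).

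Next comes the index bookkeeping for \(i\le n-2\). The three minors \(\Lambda_{[n-i+1]_n}^{[i]}\), \(\Lambda^{[i]}_{n-i,[n-i+2]_n}\) and \(\Lambda_{[n-i+1]_n}^{[i-1],i+1}\) of \(\gamma'\) use:
\begin{itemize}
\item rows inside \(\{2,\dots,n\}\), since \(n-i\ge2\);
\item columns inside \([n-1]\), since \(i+1\le n-1\).
\end{itemize}
So each is a minor of \(B\). Relabeling row \(r\) of \(\gamma'\) as row \(r-1\) of \(B\), the row sets \([n-i+1]_n\) and \(\{n-i\}\cup[n-i+2]_n\) become \([(n-1)-i+1]_{n-1}\) and \(\{(n-1)-i\}\cup[(n-1)-i+2]_{n-1}\). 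These are precisely the row sets appearing in the definition of \(\tilde{X}_{n-1}\). Hence \(B\) witnesses \((x^{(n-1)},y^{(n-1)})\in\tilde{X}_{n-1}(D^{(n-1)})\).

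For the converse, given \(B\in SL(n-1,\mathbb{Z})\) witnessing a point of \(\tilde{X}_{n-1}(D^{(n-1)})\), the matrix \(\begin{pmatrix}0&(-1)^{n-1}\\ B&0\end{pmatrix}\in SL(n,\mathbb{Z})\) has \(D_{n-1}\)-minor \(\det B=1\) and the same lower minors. So it witnesses the corresponding point of \(\tilde{X}_n(D)\). This gives the bijection and hence the identity.

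I expect the main obstacle to be justifying that the \(U(\mathbb{Z})\)-reduction does not change the data \((x,y)\). The paper only asserts this in passing, so I would check directly that adding lower rows to row \(1\), or earlier columns to column \(n\), changes each relevant minor by an integer multiple of \(D_i\) (or not at all). The second, more routine source of error is the off-by-one relabeling of row indices.
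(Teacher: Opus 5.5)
Your proof is correct and follows the same approach as the paper. Both arguments take the bottom-left \((n-1)\times(n-1)\) block \(B\) of \(\gamma\), which has determinant \(D_{n-1}=1\) and whose minors are exactly the data for \(i\le n-2\). For the converse, both embed \(B\) as \(\begin{pmatrix}0&(-1)^{n-1}\\ B&0\end{pmatrix}\).

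Your \(U(\mathbb{Z})\)-normalization step is superfluous, and the paper skips it. As you observe yourself, all the relevant minors of \(\gamma\) for \(i\le n-2\) already lie inside \(B\), and \(B\) is untouched by the normalization. So the ``main obstacle'' you anticipate never arises.
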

\begin{proof}
    It is enough to show that \((x,y)\in \tilde{X}_n(D)\) if and only if \((x^{(n-1)},y^{(n-1)})\in \tilde{X}_{n-1}(D^{(n-1)})\).
    If \((x,y)\in \tilde{X}_n(D)\), let \(\gamma\in SL(n,\mathbb{Z)}\) correspond to \((x,y)\) for \(\tilde{X}_n(D)\). Consider the lower bottom sub-matrix \(\rho=\mathcal{M}_{[2]_n}^{[n-1]}(\gamma)\). \(\rho\) has integer entries because \(\gamma\) has integer entries, and
    \[
    det(\rho)=\Lambda_{[2]_n}^{[n-1]}(\gamma)=D_{n-1}=1
    \]
    Hence \(\gamma\in SL(n,\mathbb{Z})\). Also, we have
    \[
    \Lambda_{[n-i]_{n-1}}^{[i]}(\rho)=\Lambda_{[n-i+1]_{n}}^{[i]}(\gamma)=D_i\qquad 1\le i \le n-1
    \]
    \[
    \Lambda_{n-i-1,[n-i+1]_{n-1}}^{[i]}(\rho)=\Lambda_{n-i,[n-i+2]_{n}}^{[i]}(\gamma)\equiv x_i\pmod{D_i}\qquad 1\le i \le n-1
    \]
    \[
    \Lambda_{[n-i]_{n-1}}^{[i-1],i+1}(\rho)=\Lambda_{[n-i+1]_{n}}^{[i-1],i+1}(\gamma)\equiv y_i\pmod{D_i}\qquad 1\le i \le n-1
    \]
    Therefore, \(\rho\) corresponds to \((x^{(n-1)},y^{(n-1)})\in \tilde{X}_{n-1}(D^{(n-1)})\).

    Conversely, let \((x^{(n-1)},y^{(n-1)})\in \tilde{X}_{(n-1)}(D^{(n-1)})\). Consider the block matrix \(\rho=\begin{pmatrix}
        &(-1)^{n+1}\\\gamma
    \end{pmatrix}\)
    Clearly, \(\rho\) has integer entries and \(det(\rho)=(-1)^{2(n+1)}det(\gamma)=1\), hence \(\rho\in SL(n,\mathbb{Z})\). Also,
    \[
    \Lambda_{[n-i+1]_n}^{[i]}(\rho)=\Lambda_{[n-i]_{n-1}}^{[i]}(\gamma)=D_i\qquad 1\le i \le n-1
    \]
    \[
    \Lambda_{n-i,[n-i+2]_n}^{[i]}(\rho)=\Lambda_{n-i-1,[n-i+1]_{n-1}}^{[i]}(\gamma)\equiv x_i \pmod{D_i}\qquad 1\le i \le n-1
    \]
    \[
    \Lambda_{[n-i+1]_n}^{[i-1],i+1}(\rho)=\Lambda_{[n-i]_{n-1}}^{[i-1],i+1}(\gamma)\equiv y_i \pmod{D_i}\qquad 1\le i \le n-1
    \]
    It remains to check that the conditions hold for \(i=n-1\). Indeed,
    \(\Lambda_{[2]_{n}}^{[n-1]}(\rho)=det(\gamma)=1=D_{n-1}\)
    The conditions for \(x_{n-1}\) and \(y_{n-1}\) are congruences modulo \(D_{n-1}\), but \(D_{n-1}\) is 1, and hence the congruences are trivially satisfied. Thus \(\rho\) corresponds to \((x,y)\in \tilde{X}_n(D)\).
\end{proof}
We now state other cases of degeneracy in the following corollary to the above proposition.
\begin{corollary} For \(M,N,D \in \mathbb{Z}^{n-1}\),
\begin{enumerate}[(\roman*)]
    \item If \(D_1=1\), then \(S_n(M,N;D)=S_{n-1}(M^{(1)},N^{(1)};D^{(1)})\)
    \item If \(D_{i},\cdots, D_{n-1}\) are all equal to \(1\), then 
    \[S_n(M,N;D)=S_{i}((M_1,\cdots,M_{i-1}),(N_1,\cdots,N_{i-1});(D_1,\cdots,D_{i-1}))\]
    \item If \(D_{1},\cdots, D_{i}\) are all equal to \(1\), then 
    \[S_n(M,N;D)=S_{n-i-1}((M_{i+1},\cdots,M_{n-1}),(N_{i+1},\cdots,N_{n-1});(D_{i+1},\cdots,D_{n-1}))\]
    \item If \(D_{1},\cdots,D_{i}\) and \(D_{j},\cdots,D_{n-1}\) are all equal to \(1\), then
    \[
    S_{n}(M,N;D)=S_{j-i-1}((M_{i+1},\cdots,M_{j-1}),(N_{i+1},\cdots,N_{j-1}),(D_{i+1},\cdots,D_{j-1}))
    \]
    \item If \(D_j=1\ \forall j\ne i\), then \(S_n(M,N;D)=S_2(M_i,N_i;D_i)\)
\end{enumerate}
\end{corollary}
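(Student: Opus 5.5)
The plan is to derive every part of the corollary from \cref{deg1} together with the reversal symmetry \cref{tilde}, using induction on the number of moduli equal to \(1\). No new matrix constructions should be needed; the work lies in tracking indices when coordinates are deleted and reversed.

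For (i), suppose \(D_1=1\). Apply \cref{tilde} to get \(S_n(M,N;D)=S_n(\tilde M,\tilde N;\tilde D)\). The last coordinate of \(\tilde D\) is \(\tilde D_{n-1}=D_1=1\), so \cref{deg1} gives
\[
S_n(\tilde M,\tilde N;\tilde D)=S_{n-1}(\tilde M^{(n-1)},\tilde N^{(n-1)};\tilde D^{(n-1)}).
\]
Now observe that \(\tilde T^{(n-1)}\) is exactly the reversal of \(T^{(1)}\), namely \((T_{n-1},\dots,T_2)\). Applying \cref{tilde} once more, now at rank \(n-1\), turns this into \(S_{n-1}(M^{(1)},N^{(1)};D^{(1)})\).

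For (ii), suppose \(D_i=\cdots=D_{n-1}=1\) and induct on \(n-i\). Since \(D_{n-1}=1\), \cref{deg1} lowers the rank to \(n-1\). The new tuple is \(D^{(n-1)}=(D_1,\dots,D_{n-2})\), whose trailing entries \(D_i,\dots,D_{n-2}\) are still \(1\), so we may repeat. After removing \(n-i\) coordinates we reach rank \(i\) with tuples \((M_1,\dots,M_{i-1})\), \((N_1,\dots,N_{i-1})\), \((D_1,\dots,D_{i-1})\). Part (iii) is the mirror image of (ii): iterate (i) \(i\) times, or equivalently reverse with \(\tilde{\ }\), apply (ii), and reverse back. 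Each application strips one leading coordinate, and we land at rank \(n-i\) with tuples indexed from \(i+1\) to \(n-1\). Note that the statement writes the rank as \(S_{n-i-1}\). Counting tuple lengths gives rank \(n-i\), since tuples of length \(n-i-1\) belong to \(GL(n-i)\). I would follow the indexing convention \(S_m\) with tuples in \(\mathbb{Z}^{m-1}\) and flag the discrepancy as an index typo; part (iv) has the same off-by-one.

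For (iv), first apply (ii) to remove the trailing block \(D_j,\dots,D_{n-1}\). Then apply (iii) to the result to remove the leading block \(D_1,\dots,D_i\). Part (v) is the special case of (iv) with the block boundaries at \(i-1\) and \(i+1\). This leaves the single triple \((M_i,N_i;D_i)\), that is, the \(GL(2)\) sum.

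The only real obstacle is bookkeeping. We must check that \((\tilde T)^{(n-1)}=\widetilde{T^{(1)}}\) and that the surviving \(D\)'s keep the required \(1\)'s in the right positions after each deletion. We must also handle edge cases where a block is empty, or where the rank drops to \(1\), in which case the sum is interpreted as \(1\).
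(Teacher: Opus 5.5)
Your proposal is correct and follows the paper's proof: (i) from \cref{deg1} sandwiched between two applications of \cref{tilde}, (ii) and (iii) by iterating \cref{deg1} and (i) respectively, (iv) by combining them, and (v) as the one-survivor special case. Your flag on the ranks is also right: the tuple lengths force \(S_{n-i}\) in (iii) and \(S_{j-i}\) in (iv), and only this corrected indexing makes (v) come out as \(S_2\), so the statement as printed has an off-by-one typo there.
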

\begin{proof}
Statement \((i)\) is a direct consequence of \cref{deg1} and \cref{tilde}. Statements \((ii)\) and \((iii)\) are obtained inductively using \cref{deg1} and statement \((i)\), respectively. Statement \((iv)\) is obtained by using \((ii)\) and \((iii)\) together. Lastly, statement \((v)\) follows from \((iv)\) by taking j=i+2.
\end{proof}

We now prove one of the most important properties of Kloosterman sums, which is the factorization property for co-prime moduli.

\begin{proposition}\label{factor}
    If \(gcd(D_1\cdots D_{n-2},D_{n-1})=1\) then,  
    \[
    S_n(M,N;D)=S_{n-1}(M',N^{(n-1)};D^{(n-1)})\times S_{2}(D_{n-2}M_{n-1},N_{n-1};D_{n-1})
    \]
    where \(M'=(M_1,\cdots,M_{n-3}, D_{n-1}M_{n-2})\).
\end{proposition}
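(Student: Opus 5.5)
The natural route is a CRT-style bijection on the index set $\tilde{X}_n(D)$ from \cref{mainformula}. Write $D'=D^{(n-1)}$, $P=D_1\cdots D_{n-2}$, $q=D_{n-1}$, with $\gcd(P,q)=1$. The goal is a bijection
\[
\Phi:\tilde{X}_{n-1}(D')\times \tilde{X}_2(q)\longrightarrow \tilde{X}_n(D),
\]
under which the phases transform so that the phase of the image equals the sum of the phases of the two factors, twisted exactly by the multipliers $D_{n-1}$ on $M_{n-2}$ and $D_{n-2}$ on $M_{n-1}$ that appear in the statement. Summing $e(\cdot)$ over both sides then gives the product formula.

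\textbf{Step 1: lift the two factors.} Take $\rho\in SL(n-1,\mathbb{Z})$ representing $(x',y')\in\tilde{X}_{n-1}(D')$, and $\sigma\in SL(2,\mathbb{Z})$ with lower-left entry $q$ representing a classical pair $(u,v)$, where $uv\equiv 1 \pmod q$ by \eqref{explicit2} and \cref{deg1}.

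\textbf{Step 2: build $\gamma$.} Following the structure of $b_1cw_0b_2$ in the proof of \cref{mainformula}, I would glue $\rho$ and $\sigma$ by an explicit matrix product. One natural choice is
\[
\gamma=\begin{pmatrix}\rho&\\&1\end{pmatrix}\begin{pmatrix}I_{n-2}&\\&\sigma'\end{pmatrix},
\]
where $\sigma'$ is $\sigma$ twisted by a unimodular change. Alternatively, I would avoid constructing $\gamma$ directly and use the Bruhat parametrisation instead. In that approach $\gamma$ is determined by $b_1,b_2$ modulo $U(\mathbb{Z})$, and the data $x_i/D_i=\alpha_{i,i+1}$, $y_i/D_i=\beta_{i,i+1}$ are read off via Cauchy--Binet exactly as in \cref{mainformula}. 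With this parametrisation, the coprimality $\gcd(P,q)=1$ lets one split each integrality condition into a condition modulo powers of $q$ and a condition prime to $q$, as in the $GL(3)$ argument of \cite{BFG88}.

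\textbf{Step 3: compute the phase.} Using Cauchy--Binet (\cref{CauchyBinet}) on the block product, the minors $\Lambda^{[i]}_{[n-i+1]_n}(\gamma)$ equal $D_i$. For $i\le n-3$, the $x_i,y_i$ minors reduce to those of $\rho$, so they agree with $(x',y')$ modulo $D_i$. For $i=n-1$, the relevant minors reduce to the entries of $\sigma$ multiplied by $D_{n-2}$ (from the scaling by $c$), which gives $x_{n-1}\equiv D_{n-2}u \pmod q$ and $y_{n-1}\equiv v$. For $i=n-2$, the factor $q$ enters multiplicatively, giving $x_{n-2}\equiv D_{n-1}x'_{n-2} \pmod{D_{n-2}}$. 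These match the twisted arguments $D_{n-1}M_{n-2}$ and $D_{n-2}M_{n-1}$ in the statement.

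\textbf{Step 4: bijectivity.} Injectivity and surjectivity follow by CRT. Given $\gamma$ for $D$, reduce the data modulo the parts coprime to $q$ and modulo $q$. \cref{deg1}, applied after adjusting $\gamma$ by elements of $U(\mathbb{Z})$ (which do not change the residues), recovers $\rho$ and $\sigma$. Here coprimality guarantees that $D_{n-1}$ is invertible modulo $D_{n-2}$, so the twisting in Step 3 is a bijection on residues.

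\textbf{Main obstacle.} The hardest part is showing that the glued $\gamma$ really exists in $SL(n,\mathbb{Z})$ with the prescribed minors. This needs a unimodular completion: the relevant column has entries with gcd $1$, which follows from coprimality. It also requires checking that the cross-terms in Cauchy--Binet vanish modulo $D_{n-2}$ and $D_{n-1}$. I would first try this for $n=3$, matching it against \eqref{explicit3}, before generalising.
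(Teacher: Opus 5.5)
Your skeleton matches the paper's: the same map $\Phi$, twisting $x_{n-2}$ by $D_{n-1}$ and $x_{n-1}$ by $D_{n-2}$, and the same phase bookkeeping. The gap is the step that carries all the content, namely exhibiting a matrix in $SL(n,\mathbb{Z})$ that realizes $\Phi((x',y'),(u,v))$. You leave it as your ``main obstacle'', and the one concrete candidate you give fails.

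The last row of $\begin{pmatrix}\rho&\\&1\end{pmatrix}$ is $(0,\dots,0,1)$, so the last row of your $\gamma$ is $(0,\dots,0,\sigma'_{2,1},\sigma'_{2,2})$. Hence $\Lambda^{[1]}_{[n]_n}(\gamma)=\gamma_{n,1}=0\neq D_1$ for $n\ge 3$. More to the point, every minor defining $D_i,x_i,y_i$ for $i\le n-2$ lies in rows $2,\dots,n$ and columns $1,\dots,n-1$. That is where (a modification of) $\rho$ has to go, not the top-left corner. The Bruhat alternative is not carried out at all.

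The missing idea is the paper's explicit gluing. In your notation, write $\rho=\begin{pmatrix}a&b\\c&d\end{pmatrix}$ with $c$ the lower-left $(n-2)\times(n-2)$ block, so $\det c=D_{n-2}$. Choose $pP+p'D_{n-1}=1$ and take (zero blocks of the appropriate sizes)
\[
\begin{pmatrix}
0&(-1)^{n}\sigma_{1,1}&(-1)^{n}\bigl(pP\sigma_{1,2}-p'\bigr)\\
D_{n-1}a&D_{n-1}b&(-1)^{n}pD_1\cdots D_{n-3}\,\sigma_{2,2}\\
c&d&0
\end{pmatrix}.
\]
Scaling the row $(a,b)$ by $D_{n-1}$ makes the minor on rows $2,\dots,n$, columns $1,\dots,n-1$ equal to $D_{n-1}\det\rho=D_{n-1}$. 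It turns $x'_{n-2}$ into $D_{n-1}x'_{n-2}$, while all other data with $i\le n-2$ are those of $\rho$. Cofactor expansion along the first row and last column gives $x_{n-1}=D_{n-2}\sigma_{1,1}$, $y_{n-1}=pP\sigma_{2,2}\equiv\sigma_{2,2}\pmod{D_{n-1}}$, and $\det=pP(\sigma_{1,1}\sigma_{2,2}-D_{n-1}\sigma_{1,2})+p'D_{n-1}=1$. The factor $pD_1\cdots D_{n-3}$ in position $(2,n)$ is an inverse of $D_{n-2}$ modulo $D_{n-1}$. It is forced because that entry gets multiplied by $\det c=D_{n-2}$, and it is what makes your Step 3 assertion $y_{n-1}\equiv v$ true. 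Coprimality is used exactly here and in the determinant computation.

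Your bijectivity argument also does not work as stated. For any $\gamma$ realizing a point of $\tilde X_n(D)$, the minor on rows $2,\dots,n$, columns $1,\dots,n-1$ equals $D_{n-1}$. Bottom-left minors are invariant under $U(\mathbb{Z})\times U(\mathbb{Z})$, so no $U(\mathbb{Z})$-adjustment puts that block in $SL(n-1,\mathbb{Z})$. Therefore \cref{deg1}, which is precisely the case $D_{n-1}=1$, cannot be invoked. (The paper itself only says bijectivity is ``easy to see''.)

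Injectivity is just invertibility of $D_{n-1}$ modulo $D_{n-2}$ and of $D_{n-2}$ modulo $D_{n-1}$. Surjectivity, however, needs two genuine inputs.
\begin{enumerate}
\item Let $q=D_{n-1}$ and $q\bar q\equiv1\pmod{D_{n-2}}$. Replace the top row $r_0$ of that block by $r=\bar q r_0+D_{n-2}s$, where $w$ is the vector of signed maximal minors of the other $n-2$ rows and $s$ is an integer vector chosen so that $r\cdot w=1$. This is possible because $\gcd(w)$ divides both $r_0\cdot w=D_{n-1}$ and the entry $\pm D_{n-2}$. It leaves the data for $i\le n-2$ unchanged except that $x_{n-2}$ becomes $\bar q x_{n-2}$ modulo $D_{n-2}$.
\item For the $SL(2,\mathbb{Z})$ factor, apply the Desnanot--Jacobi identity to rows $1,2$ and columns $n-1,n$ (a consequence of \cref{Jacobi}). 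It gives $x_{n-1}y_{n-1}\equiv D_{n-2}\pmod{D_{n-1}}$. Hence $(\bar D_{n-2}x_{n-1},y_{n-1})$, with $\bar D_{n-2}$ an inverse of $D_{n-2}$ modulo $D_{n-1}$, is a genuine point of $\tilde X_2(D_{n-1})$.
\end{enumerate}
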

\begin{proof}
Let \(pD_1D_2\cdots D_{n-2}+p'D_{n-1}=1\). 

We proceed by defining a function \(\phi:\tilde{X}_{n-1}(D^{(n-1)})\times\tilde{X}_2(D_{n-1})\to\tilde{X}_n(D)\) by
\[
((x,y),(u,v))\mapsto((x_1,\cdots,x_{n-3},D_{n-1}x_{n-2},D_{n-2}u),(y_1,\cdots,y_n,v))
\]
First, we show that this map is well-defined and bijective. Let \(\gamma\in SL_{n-1}(\mathbb{Z)}\) correspond to \((x,y)\) in \(\tilde{X}_{(n-1)}(D^{(n-1)})\) and \(\gamma'\in SL(2,\mathbb{Z})\) correspond to \((u,v)\) in \(\tilde{X}_{2}(D_{n-1})\). Write \(\gamma\) as the block matrix \(\begin{pmatrix}
    a&b\\c&d
\end{pmatrix}\), where \(c\in M(n-2,\mathbb{Z})\). Consider the block matrix
\[
\rho=\begin{pmatrix}
     &(-1)^{n}\gamma'_{1,1}&(-1)^n(pD_1D_2\cdots D_{n-2}\gamma'_{1,2}-p') \\
    D_{n-1}a&D_{n-1}b&(-1)^{n}pD_1D_2\cdots D_{n-3}\gamma'_{2,2} \\
    c&d
\end{pmatrix}
\]
\[
    \Lambda_{[n-i+1]_n}^{[i]}(\rho)=\Lambda_{[n-i]_{n-1}}^{[i]}(c)=\Lambda_{[n-i]_{n-1}}^{[i]}(\gamma)=D_{i}\quad1\le i\le n-2
\]
Similarly,
\[
    \Lambda_{[n-i+1]_n}^{[i-1],i+1}(\rho)=\Lambda_{[n-i]_{n-1}}^{[i-1],i+1}(\gamma)\equiv y_i \pmod{D_{i}}\quad1\le i\le n-2
\]
\[
    \Lambda_{n-i,[n-i+2]_n}^{[i]}(\rho)=\Lambda_{n-i-1,[n-i+1]_{n-1}}^{[i]}(\gamma)\equiv x_i \pmod{D_{i}}\quad1\le i\le n-3
\]
For \(i=n-2\),
\[
\Lambda_{2,[4]_n}^{[n-2]}(\rho)=\Lambda_{1,[3]_{n-1}}^{[n-2]}\begin{pmatrix}
    D_{n-1}a\\c
\end{pmatrix}=D_{n-1}\Lambda_{1,[3]_{n-1}}^{[n-2]}(\gamma)\equiv D_{n-1}x_{n-2} \pmod{D_{i}}
\]
For \(i=n-1\),
\[
\Lambda_{[2]_n}^{[n-1]}(\rho)=det\begin{pmatrix}
    D_{n-1}a& D_{n-1}b\\c &d
\end{pmatrix}=D_{n-1}det\begin{pmatrix}
    a &b\\c&d
\end{pmatrix}= D_{n-1}det(\gamma)=D_{n-1}
\]
\[
\Lambda_{1,[3]_n}^{[n-1]}(\rho)=det\begin{pmatrix}
    & (-1)^{n}\gamma'_{1,1}\\c &d
\end{pmatrix}=\gamma'_{1,1}det(c)=D_{n-2}\gamma'_{1,1}\equiv D_{n-2}u \pmod{D_{n-1}}
\]
\begin{align*}
\Lambda_{[2]_n}^{[n-2],n}(\rho)&=det\begin{pmatrix}
    D_{n-1}a& (-1)^{n}pD_1D_2\cdots D_{n-3}\gamma'_{2,2}\\c &
\end{pmatrix}\\&=pD_1D_2\cdots D_{n-3}\gamma'_{2,2}\times det(c)= pD_1D_2\cdots D_{n-2}\gamma'_{2,2}\equiv y_{n-1}\pmod{D_{n-1}}
\end{align*}

Lastly, \(\rho\) has integer entries and
\begin{align*}
det(\rho)&= (\gamma'_{1,1})(pD_1D_2\cdots D_{n-3}\gamma'_{2,2})D_2-(pD_1D_2\cdots D_{n-2}\gamma'_{1,2}-p')D_{n-1}\\
&=pD_1D_2\cdots D_{n-2}\gamma'_{1,1}\gamma'_{2,2}-pD_1D_2\cdots D_{n-1}\gamma'_{1,2}+p'D_{n-1}\\
&=pD_1D_2\cdots D_{n-2}(\gamma'_{1,1}\gamma'_{2,2}-D_{n-1}\gamma'_{1,2})+p'D_{n-1}\\
&=pD_1D_2\cdots D_{n-2}+p'D_{n-1}=1
\end{align*}
Hence \(\rho\in SL(n,\mathbb{Z})\), and corresponds to \(\phi((x,y))\) in \(\tilde{X}_n(D)\), and \(\phi\) is well-defined.

It is easy to see that \(\phi\) is indeed a bijection. Therefore,
\begin{align*}
    &S_{n}(M,N;D)=\sum_{(a,b)\in \tilde{X}_{n}(D)}e\left(\sum_{i=1}^{n-1}\frac{ M_ia_i+ N_ib_i}{D_i}\right)\\
    &=\sum_{\substack{(x,y)\in\tilde{X}_{n-1}(D^{(n-1)})\\(u,v)\in \tilde{X}_{2}(D_{n-1})}}e\left(\sum_{i=1}^{n-3}\frac{M_ix_i+N_iy_i}{D_i}+\frac{M_{n-2}D_{n-1}x_{n-2}+N_{n-2}y_{n-2}}{D_{n-2}}+\frac{M_{n-1}D_{n-2}x_{n-1}+N_{n-1}y_{n-1}}{D_{n-1}}\right)\\
    &=\left(\sum_{(x,y)\in\tilde{X}_{n-1}(D^{(n-1)})}e\left(\sum_{i=1}^{n-3}\frac{M_ix_i+N_iy_i}{D_i}+\frac{M_{n-2}D_{n-1}x_{n-2}+N_{n-2}y_{n-2}}{D_{n-2}}\right)\right)\times\\&\qquad\qquad\qquad\left(\sum_{(u,v)\in \tilde{X}_{2}(D_{n-1})}e\left(\frac{M_{n-1}D_{n-2}x_{n-1}+N_{n-1}y_{n-1}}{D_{n-1}}\right)\right)\\
    &=S_{n-1}(M',N^{(n-1)};D^{(n-1)})\times S_{2}(D_{n-2}M_{n-1},N_{n-1};D_{n-1})
\end{align*}
\end{proof}
\begin{corollary}
    If \(gcd(D_1,D_2\cdots D_{n-1})=1\) then,  
    \[
    S_n(M,N;D)=S_{2}(D_2M_{1},N_{1};D_{1})\times S_{n-1}(M'',N^{(1)};D^{(1)})
    \]
    where \(M'=(D_1M_2,M_3,\cdots,M_{n-1})\)
\end{corollary}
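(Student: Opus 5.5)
The plan is to deduce the corollary from \cref{factor} by conjugating with the reversal symmetry of \cref{tilde}, so that no new matrix construction is needed. Write \(E=\tilde D\), so \(E_j=D_{n-j}\). In particular \(E_{n-1}=D_1\) and \(E_1\cdots E_{n-2}=D_2\cdots D_{n-1}\). The hypothesis \(\gcd(D_1,D_2\cdots D_{n-1})=1\) is then exactly \(\gcd(E_1\cdots E_{n-2},E_{n-1})=1\), which is the hypothesis of \cref{factor} for the reversed data.

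First apply \cref{tilde} to get \(S_n(M,N;D)=S_n(\tilde M,\tilde N;\tilde D)\). Then apply \cref{factor} to the right-hand side. This gives
\[
S_n(\tilde M,\tilde N;\tilde D)=S_{n-1}\bigl((\tilde M)',\tilde N^{(n-1)};\tilde D^{(n-1)}\bigr)\times S_2\bigl(E_{n-2}\tilde M_{n-1},\tilde N_{n-1};E_{n-1}\bigr),
\]
where \((\tilde M)'=(\tilde M_1,\dots,\tilde M_{n-3},E_{n-1}\tilde M_{n-2})\). Substituting, the \(S_2\) factor is \(S_2(D_2M_1,N_1;D_1)\), since \(E_{n-2}=D_2\), \(\tilde M_{n-1}=M_1\), \(\tilde N_{n-1}=N_1\) and \(E_{n-1}=D_1\).

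The \(S_{n-1}\) factor has \(\tilde D^{(n-1)}=(D_{n-1},\dots,D_2)\), which is the reversal of \(D^{(1)}\), and likewise \(\tilde N^{(n-1)}\) is the reversal of \(N^{(1)}\). Also \((\tilde M)'=(M_{n-1},\dots,M_3,D_1M_2)\), which is the reversal of \(M''=(D_1M_2,M_3,\dots,M_{n-1})\); the statement writes this vector as \(M'\), but it is clearly \(M''\) that is meant. Applying \cref{tilde} once more, now in rank \(n-1\), turns this factor into \(S_{n-1}(M'',N^{(1)};D^{(1)})\). This completes the identity.

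The only delicate point is index bookkeeping: checking that removing the last coordinate of a reversed tuple is the same as reversing after removing the first coordinate, and that the scaling coefficients \(D_{n-1}\) and \(D_{n-2}\) from \cref{factor} become \(D_1\) and \(D_2\). I would also note that the case \(n=3\) is consistent, with \(M''=(D_1M_2)\). Everything else is a formal consequence of the two earlier propositions, and the same argument works in any rank \(n\ge3\).
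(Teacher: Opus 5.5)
Your proposal is correct and follows the same route as the paper, whose proof just says the corollary follows from \cref{factor} and \cref{tilde}. You make the bookkeeping explicit, including the second application of \cref{tilde} in rank \(n-1\) that the paper leaves implicit, and you are right that \(M'\) in the statement should read \(M''=(D_1M_2,M_3,\dots,M_{n-1})\).
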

\begin{proof}
    This follows directly from \cref{factor} and \cref{tilde}.
\end{proof}
As a corollary, if no two \(D_i\)'s have the same prime factor, then the higher rank Kloosterman sum decomposes into \(n-1\) simpler classical Kloosterman sums.
\begin{corollary}
If \(D_1,\cdots,D_{n-1}\) are pairwise co-prime, then 
\[
S_n(M,N,D)=\prod_{i=1}^{n-1}S_2(D_{i-1}M_iD_{i+1},N_i,D_i)
\]
with the convention that \(D_0=1,D_n=1\)
\end{corollary}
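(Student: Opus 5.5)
The plan is induction on \(n\), peeling off one modulus at a time with \cref{factor}. The case \(n=2\) is trivial, since \(D_0=D_2=1\) and the right-hand side is just \(S_2(M_1,N_1;D_1)\). For \(n\ge 3\), pairwise coprimality gives \(\gcd(D_1\cdots D_{n-2},D_{n-1})=1\). So \cref{factor} applies and yields
\[
S_n(M,N;D)=S_{n-1}(M',N^{(n-1)};D^{(n-1)})\times S_2(D_{n-2}M_{n-1},N_{n-1};D_{n-1}),
\]
with \(M'=(M_1,\cdots,M_{n-3},D_{n-1}M_{n-2})\). The second factor is already the \(i=n-1\) term of the claimed product, because the convention \(D_n=1\) gives \(D_{n-2}M_{n-1}D_n=D_{n-2}M_{n-1}\).

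Next I apply the induction hypothesis to \(S_{n-1}(M',N^{(n-1)};D^{(n-1)})\). This is legitimate because \(D_1,\cdots,D_{n-2}\) are still pairwise coprime. Inside the rank \(n-1\) sum the boundary convention is \(D_{n-1}\mapsto 1\), so the induction hypothesis gives
\[
\prod_{i=1}^{n-3}S_2(D_{i-1}M_iD_{i+1},N_i;D_i)\times S_2(D_{n-3}M'_{n-2}\cdot 1,N_{n-2};D_{n-2}).
\]
Substituting \(M'_{n-2}=D_{n-1}M_{n-2}\) turns the last factor into \(S_2(D_{n-3}M_{n-2}D_{n-1},N_{n-2};D_{n-2})\). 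This is exactly the \(i=n-2\) term of the target, and multiplying everything together gives the full product.

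The step needing the most care is the bookkeeping of the modified first argument under the induction. To make it clean, I would prove the slightly more general statement in which \(M\) is arbitrary. The induction then absorbs the multiplier \(D_{n-1}\) into the \((n-2)\)-th coordinate, and no circularity arises. I would also use the first proposition of this section, that only \(M_i \bmod D_i\) matters, to confirm that the multipliers make sense modulo \(D_i\). Finally, the mixed products \(D_{i-1}D_{i+1}\) come out correctly only if the factor from the right neighbour is introduced at the step where \(D_{i+1}\) is split off, and the factor from the left neighbour at the step where \(D_i\) itself is split off. The induction above respects exactly this order.
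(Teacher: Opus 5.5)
Your proposal is correct and is the paper's argument. The paper says only that the corollary ``follows inductively by \cref{factor}'', and you fill in that induction correctly: you peel off \(D_{n-1}\), apply the rank-\((n-1)\) statement with its boundary convention to \(S_{n-1}(M',N^{(n-1)};D^{(n-1)})\), and check that the multiplier \(D_{n-1}\) on \(M_{n-2}\) combines with \(D_{n-3}\) to give the \(i=n-2\) factor (the ``more general statement with arbitrary \(M\)'' is just the corollary itself, and the appeal to periodicity modulo \(D_i\) is unnecessary, but neither affects validity).
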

\begin{proof}
    This follows inductively by \cref{factor}.
\end{proof}
\section{Acknowledgement}
The author thanks Professor Ritabrata Munshi for suggesting the problem and for his helpful discussions. In addition, the author acknowledges the Indian Statistical Institute, Kolkata for providing a supportive research environment.

\vspace{1cm}

\begin{thebibliography}{99}

\bibitem[BFG88]{BFG88}Daniel Bump, Solomon Friedberg, and Dorian Goldfeld, Poincar\'e series and Kloosterman sums for \(SL(3,\mathbb{Z})\), Acta Arith. 50 (1988), no. 1, 31–89

\bibitem[G06]{G06}Goldfeld D. Automorphic Forms and L-Functions for the Group GL(n,R). Cambridge University Press; 2006.

\bibitem[BM24]{BM24}Blomer, V., Man, S.H. Bounds for Kloosterman sums on GL(n)
. Math. Ann. 390, 1171–1200 (2024)

\bibitem[HJ12]{HJ12}Horn, Roger A. and Johnson, Charles R.,Matrix Analysis 2nd Edition, Cambridge University Press 2012

\bibitem[Fr87]{Fr87} Friedberg, S. Poincaré series for GL(n): Fourier expansion, kloosterman sums, and algebreo-geometric estimates. Math Z 196, 165–188 (1987)
\end{thebibliography}
\end{document}